\documentclass[pdflatex,sn-basic]{sn-jnl}


\usepackage{graphicx}%
\usepackage{multirow}%
\usepackage{amsmath,amssymb,amsfonts}%
\usepackage{amsthm}%
\usepackage{mathrsfs}%
\usepackage[title]{appendix}%
\usepackage{xcolor}%
\usepackage{textcomp}%
\usepackage{manyfoot}%
\usepackage{booktabs}%
\usepackage{algorithm}%
\usepackage{algorithmicx}%
\usepackage{algpseudocode}%
\usepackage{listings}%



\theoremstyle{thmstyleone}%
\newtheorem{theorem}{Theorem}
%

\newtheorem{lemma}[theorem]{Lemma}%

\theoremstyle{thmstyletwo}%
\newtheorem{remark}{Remark}%

\theoremstyle{thmstylethree}%

\raggedbottom

%

 

\newcommand{\bb}[1]{\boldsymbol{#1}}
\newcommand{\bm}[1]{\boldsymbol{#1}}
\newcommand{\R}{\mathbb{R}}
\newcommand{\ti}{\texttt{i}}

\newcommand{\tX}{\boldsymbol{X}}
\newcommand{\tY}{\boldsymbol{Y}}
\newcommand{\tG}{\boldsymbol{G}}
\newcommand{\E}{\mathsf{E}}
\newcommand{\var}{\mathsf{var}}
\newcommand{\tF}{\boldsymbol{F}}
\newcommand{\tZ}{\boldsymbol{Z}}
\newcommand{\tx}{\boldsymbol{x}}
\newcommand{\ty}{\boldsymbol{y}}
 \newcommand{\eps}{\varepsilon}
 
\newcommand{\ii}{\texttt{i}}

\newcommand{\hatt}{\widehat{\bb \vartheta}}
\usepackage{accents}

\begin{document}
 
\title[Optimal Transport--Based Multivariate Goodness-of-Fit Tests]{Optimal Transport--Based Multivariate Goodness-of-Fit Tests}

\author[1]{\fnm{Zden\v ek} \sur{Hl\'avka}}\email{hlavka@karlin.mff.cuni.cz}

\author*[1]{\fnm{\v{S}\'arka} \sur{Hudecov\'a}}\email{hudecova@karlin.mff.cuni.cz}
\equalcont{These authors contributed equally to this work.}

\author[2,3]{\fnm{Simos G. } \sur{Meintanis}} 
\equalcont{These authors contributed equally to this work.}

\affil*[1]{\orgdiv{Department
of Mathematical Statistics and Probability, Faculty of Mathematics and Physics}, \orgname{Charles University}, \city{Prague}, \country{Czech Republic}}

\affil[2]{\orgdiv{Department of Economics}, \orgname{National and Kapodistrian University of Athens},  \city{Athens},   \country{Greece}}

\affil[3]{\orgdiv{Pure and Applied Analytics}, \orgname{North--West University},   \city{Potchefstroom}, \country{South Africa}}

\abstract{Characteristic function-based goodness-of-fit tests are suggested for  multivariate distributions. The test statistics, which are straightforward to compute, are defined as two-sample criteria measuring discrepancy  of empirical characteristic functions between multivariate ranks of the original observations and the ranks obtained from an artificial sample generated from the reference distribution under test.
Multivariate ranks are constructed using the theory of the optimal measure transport, thus rendering the tests of a simple null hypothesis distribution-free, while bootstrap approximations are still necessary for testing composite null hypotheses. Asymptotic theory is developed and a simulation study, concentrating on comparisons with previously proposed tests of multivariate normality, demonstrates that the method performs well in finite samples. The broad applicability of the proposed methods is illustrated through an application to a real dataset.}

\keywords{multivariate goodness-of-fit, Distribution--free test,
Multivariate ranks, Empirical characteristic function, Optimal measure transport}

\maketitle


\section{Introduction}

Let  $\bb{X}\in \mathbb R^p, \ p\geq 1$, be a random vector with an absolutely continuous cumulative distribution function (DF) $F_{\bb{X}}$, given as $F_{\bb{X}}(\bb{x})=\mathbb P(\bb{X}\leq  \bb{x})$, $\bb{x}\in\mathbb{R}^p$. We shall begin our exposition with the {\it{simple}} hypothesis, whereby on the basis of independent copies ${\boldsymbol {\cal{X}}}_{n}:=\big\{\boldsymbol X_1,...,\boldsymbol X_n\big\}$ on $\bb X$, we consider the problem of goodness-of-fit (GoF) testing with the null hypothesis \begin{equation} \label{null} {\cal H}_0:
F_{\bb{X}}\equiv F_0, \end{equation} 
against general alternatives, where $F_0$ is the DF of a completely specified distribution. The discussion is extended to the problem of testing a \emph{composite} null hypothesis later in Section~\ref{sec_5}.

GoF tests for arbitrary distributions in the multivariate setting have become particularly important in recent times with the wide availability of data in high dimension. The case of normality is, not surprisingly, special, as there exists a plethora of GoF tests for the multivariate normal distribution; the reader is referred to the surveys of \cite{thode2002}, \cite{henze2002}, \cite{ebner2020}, \cite{arnast}, \cite{chen2023}.
On the other hand, outside the Gaussian context, there exist a few tests which however are only tailored for specific multivariate distributions \citep{meintanis2010, meintanis2015,meintanis2023},
while GoF methods for arbitrary multivariate laws are relatively scarce. A few exceptions are the general tests of 
\cite{gamero2009}, \cite{meintanis2014}, \cite{khmaladge2016}, \cite{ebner2018}, \cite{hallin2021_mordant}. 
Most of the aforementioned general methods however are computationally intensive, either in computing the test statistic itself and/or in getting critical values because of the need for bootstrap approximations. In other words an {\it{exactly}}, i.e. for each sample size,  distribution-free GoF test  rendering the bootstrap unnecessary, while at the same time involving a reasonable amount of computational complexity, is still missing in the literature.  The subject matter of this paper is to propose a computationally mild general approach for conducting GoF tests for multivariate laws that enjoys the property of exact distributional freeness.

To this end write $\varphi_{\bb X}(\boldsymbol t) =\mathsf E
[\exp({\texttt {i}}{\boldsymbol{t^\top X}})]$ and $\varphi_0(\bb t)$,  ${\boldsymbol{t}} \in \mathbb R^p$, for the  characteristic function (CF) corresponding to $F_{\bb X}$ and $F_0$, respectively. In view of the uniqueness of CFs, a reasonable CF--based criterion for the null hypothesis  ${\cal H}_0$ figuring in \eqref{null} is the statistic  
\begin{equation} \label{test}
D_n = D_{n}({\boldsymbol {\cal{X}}}_{n})=n  \int_{\mathbb R^p} |
\widehat{\varphi}_n({\boldsymbol{t}})-\varphi_0({\boldsymbol{t}})|^2 \ w({\boldsymbol{t}}) {\rm{d}}{\boldsymbol{t}},
 \end{equation}
where 
\begin{equation}\label{ecf}
\widehat{\varphi}_n({\boldsymbol{t}})=\frac{1}{n} \sum_{j=1}^{n}
\exp{({\texttt{i}} {\boldsymbol{t}}^\top {\boldsymbol{X}}_{j})},
\end{equation} 
is the empirical CF corresponding to ${\boldsymbol {\cal{X}}}_{n}$ and   $w(\cdot)$ is a nonnegative weight
function, satisfying
\begin{equation}  \label{weight}
   w({\boldsymbol{t}})=w(-{\boldsymbol{t}}),  \  
   \  \int_{\mathbb R^p}  w({\boldsymbol{t}}){\rm{d}}{\boldsymbol{t}}<\infty.
   \end{equation}
 Large values of $D_{n}$ indicate violation of the null hypothesis ${\cal{H}}_0$,  so the corresponding test rejects ${\cal{H}}_0$ if $D_n$ exceeds a certain threshold computed from the (asymptotic) distribution under the null. 

Goodness-of-fit criteria for multivariate observations incorporating test statistics as the one  in \eqref{test} may be computationally intensive, depending on the null CF $\varphi_0(\cdot)$, and have a typical but non-standard asymptotic  behavior under the null hypothesis ${\cal{H}}_0$ that involves the DF of $\boldsymbol X_1$. Our proposal herein also involves a CF--based test,  but in the interest of computational simplicity, rather than using the formulation figuring in \eqref{test}  
we employ the Monte Carlo approach of casting any GoF test as a two--sample test, suggested by \cite{chen2022}. 
Moreover distribution--freeness is achieved by employing, instead of the original observations, the corresponding multivariate ranks based on optimal transport theory recently developed by \cite{hallin2021} and \cite{deb2023}.

The rest of this work is outlined as follows.  Section \ref{sec_3} briefly introduces the concept of multivariate ranks. 
 In Section \ref{sec_2}, we define and compute the test statistic for a simple null hypotheses, while 
 the case of a composite null hypothesis involving unspecified distributional parameters is considered in Section \ref{sec_5}. 
The large--sample results under the null hypothesis as well as under alternatives are provided in Section \ref{sec_2}. 
 The finite-sample behavior of the methods is investigated by a Monte Carlo study 
 in Section \ref{sec_6}, and a real data application is provided in Section~\ref{sec:real}. The paper concludes with the discussion of results in Section \ref{sec:concl}. 
 All proofs are deferred to the Appendix, which also includes supplementary material related to the simulation study and the real data analysis.

\section{Multivariate ranks based on optimal transport}\label{sec_3}

 Univariate rank-based methods provide a robust, flexible, and distribution-free approach to statistical analysis. However, extending rank-based inference to a multivariate setting is not a straightforward task due to the absence of canonical ordering in $\R^p$ for $p>1$. Various concepts of multivariate ranks have been considered in the literature, for instance, componentwise ranks, spatial ranks and signs, depth-based ranks, or Mahalanobis ranks and signs, see \citet[Section~1.2]{hallin2021} for more details and further references.  
Recently, ranks and signs based on the optimal measure transport have become  popular and proved to be useful in various multivariate statistical  problems, see, e.g., \citet{shi, deb2023,hallin2023,huang2023,HH24}.

Let $(\mu,\nu)$ be a pair of probability measures on $\mathbb R^p$. 
The optimal measure transport (OMT)  problem, first formulated by \cite{monge}, 
 seeks to find a mapping $\bb{G}:\R^p\to\R^p$ that minimizes
 \begin{equation}\label{eq:monge}
\min_{\bb{G}}  \E \, \mathcal{C}\Big(\tZ,\bb{G}(\tZ)\Big)  {\ \rm{subject \ to\ }} \tZ\sim \mu, \  \bb{G}(\tZ)\sim \nu
\end{equation}
for a given cost function $\mathcal{C}:\R^p\times \R^p \to [0,\infty)$. We write $\bb{G}\#\mu=\nu$ when $\bb{G}(\boldsymbol Z)\sim \nu$ for $\boldsymbol Z\sim\mu$, and say that $\bb{G}$ pushes $\mu$ to $\nu$. 
The most common choice  is   $\mathcal{C}(\tx,\ty)=\|\tx-\ty\|^2$, therefore, we consider only this cost function in everything what follows. The concept of OMT has been recently used to define multivariate ranks and quantiles \citep{hallin2021, deb2023} and data depth \citep{galichon2017}.

Let $\tZ$ be a random vector with an absolutely continuous distribution $\mathcal{P}_{\tZ}$ on $\R^p$ and let $\nu$ be some specified reference measure on $\R^p$.   If $\tZ$ has finite second moments, then there exists
 a unique solution $\bb{G}^*$ to \eqref{eq:monge} with $\mathcal{C}(\tx,\ty)=\|\tx-\ty\|^2$. More generally (without the assumption $\E \|\tZ\|^2<\infty$), the mapping $\bb{G}^*$ can be defined as described in Remark~\ref{rem1} below. In any case, $\bb{G}^*$ is
in some sense unique mapping that pushes $\mathcal{P}_{\tZ}$ to $\nu$, see  Remark~\ref{rem1} for details.
\cite{deb2023} take $\nu$ as the uniform measure on $[0,1]^p$ and call $\bb{G}^*$ the  population rank function. 
\cite{hallin2021}  specify $\nu$ as the spherical distribution of a random vector
 $U \bm{S}$, 
where $U$ is  uniformly distributed on $[0,1]$ and  independent of $\bm{S}$ with a uniform distribution
on the unit sphere $ \mathcal{S}_{p}=\{\bm{x} \in \mathbb{R}^p; \|\bm{x}\| = 1\}$.
The optimal mapping $\bb{G}^*$ from \eqref{eq:monge} is then called the multivariate central-outward distribution function. 

Let  $\bm{\mathcal{Z}}_N: = \big\{\bb{Z}_1,\dots,\bb{Z}_N\big\}$ be a random sample from $\mathcal{P}_{\tZ}$. Moreover, let $\mathcal{G}_N =\{\bb{g}_1,\dots,\bb{g}_N\}$ be a given grid of points from the support of $\nu$. 
The empirical version $\widehat{\bb{G}}_N$ of  $\bb{G}^*$ is the solution to \eqref{eq:monge}  with $\mu$ and $\nu$ being the empirical distributions on  $\bm{\mathcal{Z}}_N$ and $\mathcal{G}_N$, respectively.  In other words, 
\begin{equation}\label{eq:monge_sample}
\widehat{\bb{G}}_N = \arg\min_{\bb{G}} \sum_{i=1}^N \|\bb{G}(\bb{Z}_i)-\bb{Z}_i\|^2,
\end{equation}
 where the $\arg\min$  is computed  among all bijections  $\bb{G}:\bm{\mathcal{Z}}_N \to \mathcal{G}_N$.
Consequently, $\widehat{\bb{G}}_N$ is used to define the multivariate ranks of $\bb{Z}_1,\dots,\bb{Z}_N$, but
the terminology here slightly varies: \cite{deb2023} call multivariate rank of $\tZ_i$ directly $\widehat{\bb{G}}_N(\tZ_i)$, while  \cite{hallin2021} define univariate ranks and multivariate signs   as simple functions of $\widehat{\bb{G}}_N(\tZ_i)$. 
 For simplicity, we call $\bb{R}_i= \widehat{\bb{G}}_N(\tZ_i)$ the multivariate rank of $\bb{Z}_i$ in what follows. Remark that computation of the discrete OMT $\widehat{\bb{G}}_N$  is a standard optimalization task that can be formulated as 
 a linear program for which efficient algorithms are available, see
\citet[Chapter~3]{peyre2019}.

 Notice that the multivariate ranks depend on the choice of the grid $\mathcal{G}_N$. Selection of the grid points should be driven by the choice of the theoretical reference measure $\nu$ in a sense that 
 the uniform measure on $\mathcal{G}_N$ converges weakly to $\nu$ as $N\to\infty$. In that case, it is possible to show that
 \[
 \frac{1}{N}\sum_{i=1}^N \| \bb{G}^*(\tZ_i) - \widehat{\bb{G}}_N(\tZ_i)\| \to 0 \quad \text{a.s. as } N \to \infty,
 \]
 see \citet[Theorem~2.1]{deb2023}. Under slightly stronger conditions, see \citet[Proposition~2.4]{hallin2021}, the convergence 
 is uniform 
 \begin{equation}\label{eq:GC}
 \max_{1\leq i\leq N} \| \bb{G}^*(\tZ_i) - \widehat{\bb{G}}_N(\tZ_i)\| \to 0 \quad \text{a.s. as } N \to \infty.
 \end{equation}
Due to their choice of $\nu$,  
 \cite{deb2023}  take the grid points $\mathcal{G}_N$ as a subset of $[0,1]^p$, while \cite{hallin2021} consider grid points in the unit ball in $\R^p$. In general, if the support of $\nu$ is $S\subset \R^p$, then the grid points should be distributed in $S$  
 so that the uniform measure on $\mathcal{G}_N$ well approximates $\nu$.
 There is not a unique approach to achieve this, and a construction of $\mathcal{G}_N$ often makes use of low-discrepancy sequences, e.g., Halton sequences \citep{halton} or GLP sets \citep[Section 1.3]{fang}.

If the reference measure $\nu$ is uniform on $[0,1]^p$ as in \cite{deb2023}, then the  grid set can be taken simply as a Halton sequence $\{\bb x_i\}_{i=1}^N$ in $[0,1]^p$ \citep{halton,randtoolbox}. Such grid will be referred to as a rectangular grid, abbreviated as $\mathcal{G}_N^R$. 

If $\nu$ is the distribution of the spherically uniform distribution of the random vector $U\bm{S}$, then \cite{hallin2021} recommend to take $\mathcal{G}_N$ as a collection of $n_0$ replications of $\bm 0$ and a set of points $\{\bb g_{ij}\}_{i=1,j=1}^{n_R,n_S}$, where $\bm{g}_{ij} =\frac{i}{n_R+1} \bm{s}_j$, where $n_0$, $n_R$, $n_S$ are such that $n_R\cdot n_S + n_0 = N$ and $\bm s_1,\dots,\bm s_{n_S}$ are directional vectors chosen uniformly as possible from the unit sphere $\mathcal{S}_p$. However, this approach requires the choice of $n_R$ and $n_S$. \cite{HHH2025} suggest to take $\{\bb g_i\}_{i=1}^N$ such that
\[
\bb g_i = x_{i,1}\cdot \bb{s}_i, 
\]
where $\{\bb x_i\}_{i=1}^N$ is a Halton sequence in $[0,1]^p$, 
$\bb x_i=(x_{i,1},\dots,x_{i,p})^\top$, so $x_{i,1}$ is the first element of $\bb x_i$, and $\bb s_i$ is a directional vector from the unit sphere in $\R^p$ computed as $\bb s_i = \tau(x_{i,2},\dots,x_{i,p})$, where $\tau$ is a mapping from $[0,1]^{p-1}$ to the unit sphere $\mathcal{S}_p$ such that $\tau(\bb U)$ has a uniform distribution on $\mathcal{S}_p$ whenever $\bb U$ has a uniform distribution on $[0,1]^{p-1}$, see \citet[Section 1.5.3]{fang}. Such grid will be referred to as a spherical grid, abbreviated as $\mathcal{G}_N^S$. 

A graphical illustration of $\mathcal{G}_N^R$ and $\mathcal{G}_N^S$ is provided in Figure~\ref{fig:grids} in Appendix~\ref{sec:grids}.

 \begin{remark}\label{rem1}
 \cite{hallin2021} define so-called center outward distribution function  $\tF_{\pm}$ more generally, without the assumption of finite second order moments of $\tZ$, 
  as the unique gradient of a convex mapping that 
 pushes $\mathcal{P}_{\tZ}$ to $\nu$, where $\nu$ is the distribution of the random vector $U \bm{S}$ specified above. The uniform convergence in \eqref{eq:GC} then holds for $\tG^*$ replaced by $\tF_{\pm}$.
  If $\E \|\tZ\|^2 < \infty$, 
  then it follows from Brenier’s theorem   \cite[Theorem 2.32]{villani} that $\tF_{\pm}=\bb{G}^*$, where $\bb{G}^*$ is the solution to the Monge's problem in \eqref{eq:monge}, see  \citet[Chapter~3]{villani}.  
  
  Hence, if one is not willing to make an assumption about finiteness of the second order moments of $\tZ$, then it is possible to define $\bb{G}^*$ as $\tF_{\pm}$. Such $\bb{G}^*$ still pushes $\mathcal{P}_{\tZ}$ to $\nu$ and enjoys a uniqueness property, since it is the unique gradient of a convex mapping with this property. Moreover, it is also the optimal mapping  with this property if  $\E \|\tZ\|^2 < \infty$.
\end{remark}

\section{Test statistic}\label{sec_2}

Consider a random sample ${\boldsymbol {\cal{X}}}_{n}$ from the absolutely continuous distribution $\mathcal{P}_{\tX}$ with DF $F_{\tX}$, and recall that the aim is to test $\mathcal{H}_0$ in \eqref{null} for a given absolutely continuous DF $F_0$. Let $m$ be some integer, the choice of which will be discussed later, 
 and let  ${\boldsymbol {\cal{X}}}^{(0)}_{m}:=\big\{\boldsymbol X^{(0)}_{1},...,\boldsymbol X^{(0)}_{m}\big\}$ be a random sample drawn from the distribution with DF $F_0$, independent of  ${\boldsymbol {\cal{X}}}_{n}$. Furthermore, let $\mathcal{G}_N$ be a specified grid of $N=n+m$ points. 

Denote as $\bm{\mathcal{Z}}_N = {\boldsymbol {\cal{X}}}_{n} \cup {\boldsymbol {\cal{X}}}^{(0)}_{m}$ the union of the two samples. 
For $\bm{\mathcal{Z}}_N$ and $\mathcal{G}_N$, we can compute the OMT $\widehat{\bb{G}}_N$ from \eqref{eq:monge_sample}. 
Let  ${\boldsymbol {\cal R}}_{n}:=\big\{\boldsymbol{R}_{i}\big\}_{i=1}^{n}$, $\bb{R}_i= \widehat{\bb{G}}_N(\tX_i)$, be the collection of rank vectors associated with ${\boldsymbol {\cal{X}}}_{n}$, and similarly ${\boldsymbol {\cal R}}^{(0)}_{m}:=\big\{\boldsymbol{R}^{(0)}_{j}\big\}_{j=1}^{m}$, $\bb{R}_j^{(0)} = \widehat{\bb{G}}_N(\tX_j^{(0)})$, be  the collection of rank vectors associated with ${\boldsymbol {\cal{X}}}^{(0)}_{m}$. 
Then a rank-based analogue of the test statistic $D_{n}({\boldsymbol {\cal{X}}}_{n})$ from \eqref{test} is given by  
\begin{equation} \label{ranktest}
D_{n,m}=D_{n,m}\big({\boldsymbol {\cal{R}}}_{n},{\boldsymbol {\cal{R}}}^{(0)}_{m}\big)=\frac{nm}{n+m}  \int_{\mathbb R^p} |
\widehat{\phi}_n({\boldsymbol{t}})-\widehat{\phi}^{(0)}_{m}({\boldsymbol{t}})|^2 \ w({\boldsymbol{t}}) {\rm{d}}{\boldsymbol{t}},
 \end{equation}
where $\widehat{\phi}_n$ and $\widehat{\phi}^{(0)}_{m}$ are computed as in \eqref{ecf} 
for ${\boldsymbol {\cal{R}}}_{n}$ and 
${\boldsymbol {\cal{R}}}^{(0)}_{m}$, respectively, that is
\[
\widehat{\phi}_n({\boldsymbol{t}})=\frac{1}{n} \sum_{j=1}^{n}
\exp{\big({\texttt{i}} {\boldsymbol{t}}^\top {\boldsymbol{R}}_{j}\big)}, \quad 
\widehat{\phi}_m^{(0)}({\boldsymbol{t}}) = \frac{1}{m} \sum_{j=1}^{m}
\exp{\big({\texttt{i}} {\boldsymbol{t}}^\top {\boldsymbol{R}}_{j}^{(0)}\big)}.
\]

The test statistic $D_{n,m}$ in \eqref{ranktest} is a two--sample test statistic that
measures the distance between the empirical CF of the ranks ${\boldsymbol {\cal{R}}}_{n}$ of the observed data ${\boldsymbol {\cal{X}}}_{n}$ and the corresponding empirical CF of the ranks ${\boldsymbol {\cal{R}}}^{(0)}_{m}$ obtained from an artificial random sample ${\boldsymbol {\cal{X}}}^{(0)}_{m}$ drawn from the reference distribution with DF $F_0$. 
The idea of casting any GoF test as a two--sample test goes back to \cite{friedman}, 
and has been used very effectively in \cite{chen2022} and \cite{karling2023}
for CF--based GoF testing, with promising results.
However while these works employ the original data here we propose to use the corresponding ranks.    As it will be discussed below, the use of appropriate multivariate ranks leads to an exactly distribution--free test while, additionally, basing the test on empirical CFs retains the computational simplicity and, of course, the consistency  of earlier CF--based tests.

Under the null hypothesis $\mathcal{H}_0$, both samples $\bb{\mathcal{X}}_n$ and $\bb{\mathcal{X}}^{(0)}_m$ come from the same distribution with DF $F_0$ and with a population OMT function $\bb{G}^*$ such that both $\bb{G}^*(\tX_i)$ and $\bb{G}^*(\tX_j^{(0)})$, $i=1,\dots,n$, $j=1,\dots,m$, have the reference distribution $\nu$. It follows from \eqref{eq:GC} that for $m,n$ large, both $\widehat{\phi}_n$ and $\widehat{\phi}_m^{(0)}$ will be close to the CF of $\nu$ and the test statistic $D_{n,m}$ is expected to be small. On the other hand, large values of $D_{n,m}$ indicate that  the distributions of  ${\boldsymbol {\cal{R}}}_{n}$ and  
${\boldsymbol {\cal{R}}}^{(0)}_{m}$ differ, indicating that the two samples ${\boldsymbol {\cal{X}}}_{n}$ and ${\boldsymbol {\cal{X}}}^{(0)}_{m}$ do not come from the same distribution. Therefore, the null hypothesis $\mathcal{H}_0$ is rejected if 
\[
D_{n,m}  > c_{n,m,\alpha},
\]
where $c_{n,m,\alpha}$ is a critical value such that the test keeps the prescribed level $\alpha$. Remark that $c_{n,m,\alpha}$ depends not only on the sample sizes $n,m$ and level $\alpha$, but also on the choice of  the grid points $\mathcal{G}_N$, the weight function $w$, and the dimension $p$. Note that the choice of the set $\mathcal{G}_N$ implicitly involves the choice of the reference measure $\nu$, i.e.~the type of the multivariate ranks.
On the other hand, the critical value $c_{n,m,\alpha}$ does not depend on the distribution $\mathcal{P}_{\tX}$, as justified in the following lemma.

\begin{lemma}\label{lem1}
Under the null hypothesis $\mathcal{H}_0$, the distribution of $D_{n,m}$ is free of $\mathcal{P}_{\tX}$. 
\end{lemma}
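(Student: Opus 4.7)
The plan is to show that, under $\mathcal{H}_0$, the joint distribution of the combined rank vector $(\bb{R}_1,\ldots,\bb{R}_n,\bb{R}_1^{(0)},\ldots,\bb{R}_m^{(0)})$ is uniform over all $N!$ orderings of the fixed grid $\mathcal{G}_N$, and then to conclude that $D_{n,m}$, being a measurable function of these ranks, must have a distribution depending only on $\mathcal{G}_N$ and the weight $w$, not on $F_0$.

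First, I would note that under $\mathcal{H}_0$ the pooled sample $\mathcal{Z}_N$ consists of $N=n+m$ i.i.d.\ observations from $F_0$, hence forms an exchangeable sequence. Because $F_0$ is absolutely continuous, the entries of $\mathcal{Z}_N$ are distinct almost surely, and the discrete OMT problem in \eqref{eq:monge_sample} is an assignment problem whose optimal bijection $\widehat{\bb{G}}_N$ is unique with probability one---equalities between two distinct assignment costs cut out a Lebesgue-null subset of $(\mathbb{R}^p)^N$. Fixing an enumeration $\mathcal{G}_N=\{\bb{g}_1,\ldots,\bb{g}_N\}$ and parameterising bijections by $\pi\in S_N$ through $\widehat{\bb{G}}_N(\bb{Z}_i)=\bb{g}_{\pi(i)}$, the optimum is identified with the (a.s.\ unique) minimiser $\pi^\ast$ of $\sum_{i=1}^N\|\bb{g}_{\pi(i)}-\bb{Z}_i\|^2$.

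Next, I would establish the equivariance of the assignment under relabelling: replacing $(\bb{Z}_1,\ldots,\bb{Z}_N)$ by $(\bb{Z}_{\sigma(1)},\ldots,\bb{Z}_{\sigma(N)})$ for some $\sigma\in S_N$ leaves the multiset of (data-point, grid-point) pairs unchanged, so a direct change of summation index shows that the new minimiser equals $\pi^\ast\circ\sigma$. Exchangeability of $\mathcal{Z}_N$ under $\mathcal{H}_0$ therefore yields $\pi^\ast\circ\sigma\stackrel{d}{=}\pi^\ast$ for every $\sigma\in S_N$. The right-multiplication action of $S_N$ on itself is transitive, so the only probability measure on $S_N$ invariant under this action is the uniform one; hence $\pi^\ast$ is uniformly distributed on $S_N$, a law that involves neither $F_0$ nor any other feature of $P_{\tX}$.

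Consequently, the joint distribution of $(\bb{R}_1,\ldots,\bb{R}_n,\bb{R}_1^{(0)},\ldots,\bb{R}_m^{(0)})=(\bb{g}_{\pi^\ast(1)},\ldots,\bb{g}_{\pi^\ast(N)})$ is determined solely by $\mathcal{G}_N$, and the same is true for $D_{n,m}$. The main obstacle I expect is bookkeeping rather than conceptual: one has to make the equivariance $\pi^\ast\mapsto\pi^\ast\circ\sigma$ precise---respecting the a.s.\ uniqueness caveat---before invoking the symmetry argument to deduce the uniform law.
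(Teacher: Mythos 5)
Your proposal is correct, and its core reduction coincides with the paper's: under $\mathcal{H}_0$ the pooled sample is i.i.d., the joint rank vector is uniformly distributed over the $N!$ orderings of the fixed grid $\mathcal{G}_N$, and since $D_{n,m}$ is a measurable function of the ranks (given $\mathcal{G}_N$ and $w$), its law cannot depend on $P_{\tX}$. The difference is in how that key uniformity is obtained: the paper simply cites it (Proposition~2.2 of Deb and Sen, 2023, resp.\ Proposition~2.5 of Hallin et al., 2021) and additionally uses its Lemma~\ref{lem2} to observe that, the grid being fixed, the only randomness in $D_{n,m}$ comes from $\bb{\mathcal{R}}_n$ alone; you instead prove the uniformity from first principles via an exchangeability--equivariance argument (relabelling the data by $\sigma$ turns the optimal assignment $\pi^\ast$ into $\pi^\ast\circ\sigma$, and the only right-translation-invariant law on $S_N$ is uniform), which is a valid and self-contained substitute for the citation. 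Your a.s.-uniqueness argument is also sound --- the cost difference between two distinct assignments is affine in the data, so ties live on finitely many hyperplanes --- but note it tacitly assumes the grid points are pairwise distinct; for grids with repeated atoms (e.g.\ the Hallin-type grid with $n_0$ copies of the origin) one needs a measurable tie-breaking rule, after which the same symmetry argument, and hence the conclusion, still goes through. The paper's extra step via Lemma~\ref{lem2} is not needed for the lemma itself, so your shorter route loses nothing here.
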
 

The significance of the CF-based test statistic \eqref{test}, computed directly from the original data, is often evaluated via resampling or permutation techniques because its finite as well as asymptotic distribution typically depends on the unknown distribution $\mathcal{P}_{\tX}$ in a non-trivial way. In contrast to this,  
the finite sample distribution freeness of our test statistic $D_{n,m}$ allows to avoid these resampling techniques as the  critical value $c_{n,m,\alpha}$, for specified  $m,n$, $\mathcal{G}_N$ and  $w$, needs to be computed only once. Table~\ref{tab:crit} provides an example of critical values $c_{n,m,\alpha}$ for $p=2$.   Alternatively, the critical values for $D_{n,m}$ can be calculated from its asymptotic distribution provided in Section~\ref{sec_4}, but it is typically easier to compute $c_{n,m,\alpha}$ using Monte Carlo simulations.

\subsection{Computation and choice of weight function}\label{sec:comp}

Using the well-known routine calculations for the test statistic in \eqref{ranktest} with  a weight function satisfying \eqref{weight} yields the equivalent expression  
\begin{eqnarray} \label{sum}  
D_{n,m}
&=& \frac{m}{n(m+n)}\sum_{j,k=1}^n C_w(\boldsymbol R_{j}-\boldsymbol R_{k}) +\frac{n}{m(n+m)}\sum_{j,k=1}^{m} C_w(\boldsymbol R^{(0)}_{j}-\boldsymbol R^{(0)}_{k}) \notag \\ &&\mbox{}- \frac{2}{n+m}\sum_{j=1}^n \sum_{k=1}^{m} C_w(\boldsymbol R_{j}-\boldsymbol R^{(0)}_{k}),
\end{eqnarray}
where 
\begin{equation}\label{Cw}
C_w({\boldsymbol{x}})=\int_{\mathbb R^p} \cos({\boldsymbol{t}}^\top{\boldsymbol{x}})
 w({\boldsymbol{t}}){\rm{d}}{\boldsymbol{t}}.
 \end{equation}
If $w$ is selected as a density of a spherical distribution on $\R^p$,
then $C_w({\boldsymbol{x}})$ is its characteristic function, and 
the formula~\eqref{sum} provides a closed form expression for $D_{n,m}$ that avoids computation of multivariate integrals. Specifically, if $w$ is 
a density of a spherical stable distribution, then 
\begin{equation}\label{eq:Cw1}
C_w(\boldsymbol x)=\exp(- \|\boldsymbol x\|^\gamma), \ 0<\gamma\leq 2. 
\end{equation}
If $w$ is a density of a generalized spherical Laplace distribution, then $C_w(\boldsymbol x)=(1+\|\boldsymbol x\|^2)^{-\gamma}, \gamma>0$.   
Remark that in order to gain extra flexibility, one can compute $D_{n,m}$ from \eqref{sum} with $C_w$ enhanced by an additional scale parameter $a >0$. For instance, for the function from 
 \eqref{eq:Cw1} this leads to
\begin{equation}\label{eq:Cw}
C_{a,\gamma}(\bb x)=\exp(- \|a \bb x\|^\gamma),
\end{equation}
 which is for $\gamma=2$ the characteristic function of the normal distribution $\mathsf{N}(\bm{0}, 2a^2 \bm{I})$.

Remark that our test statistic is related to
the celebrated energy criterion of \cite{szekely2013}, 
  when applied to the ranks $\bb{\mathcal{R}}_n$ and $\bb{\mathcal{R}}_m^{(0)}$.
If $w$ is chosen such that $C_w$ is given as in \eqref{eq:Cw1} and one uses  the approximation ${\rm{e}}^x\approx 1+x$, then 
$D_{n,m}\approx  E_{n,m}$, where
\begin{align} \label{energy}
 E_{n,m}=&E_{n,m}({\boldsymbol {\cal{R}}}_{n},{\boldsymbol {\cal{R}}}^{(0)}_{m})= \frac{2}{n+m}\sum_{j=1}^n \sum_{k=1}^{m}\|\boldsymbol R_{j}-\boldsymbol R^{(0)}_{k}\|^\gamma \\  \notag
&-\frac{m}{n(n+m)}\sum_{j,k=1}^n \|\boldsymbol R_{j}-\boldsymbol R_{k}\|^\gamma -\frac{n}{m(n+m)}\sum_{j,k=1}^{m} \|\boldsymbol R^{(0)}_{j}-\boldsymbol R^{(0)}_{k}\|^\gamma.
\end{align}
It should be pointed out that the energy statistic {\it{based on the original observations}} is applicable only for $\gamma \in (0,2)$, and then only if $\mathsf E\|X_t\|^{\gamma}<\infty$, while the statistic in \eqref{test} or \eqref{ranktest} works for  $\gamma\in(0,2]$ (stable density)  or for all $\gamma>0$ (Laplace density), and is free of moment assumptions.  Because we use ranks though, the energy statistic  is expected to work even with heavy-tailed data.  

\cite{deb2023} proposed a rank energy statistic based on the OMT ranks (with a reference measure $\nu$ being uniform on $[0,1]^p$) for a two-sample problem for testing the equality of two continuous distributions. Their test statistic, for testing the equality of distributions of $\boldsymbol{\mathcal{X}}_n$ and $\boldsymbol{\mathcal{X}}^{(0)}_m$, takes the form
\eqref{energy} with $\gamma=1$. Hence, our procedure can be seen also as a generalization of the test recommended by \cite{deb2023} for a two-sample problem.

\begin{remark}
The idea of using an artificial sample to test some multivariate GoF hypothesis has been already used in \cite{chen2022}. Their simulations indicate that  efficiency can be improved by repeating the procedure $M$ times, for some chosen $M>0$, and using the average over these $M$ repetitions as the final test statistic. A similar idea could potentially be applied to the test based on $D_{n,m}$. However, this would require computing the discrete optimal transport in \eqref{eq:monge_sample} $M$-times. As a  result, the test would become computationally demanding for $n,m,N$ large.  Note also that the critical values would need to be recalculated accordingly.
\end{remark}

\subsection{Asymptotics   under the null hypothesis}\label{sec_4} 

In the following, we assume that  $\nu$ is a specified absolutely continuous reference measure on $\R^p$ with a compact support $\mathcal{S} \subset \R^p$,  
$N=n+m$, and the weight function $w$ satisfies~\eqref{weight}. 

\begin{theorem}\label{th2}
Let  $\{\mathcal{G}_N\}$ be a  sequence of grids in $\mathcal{S}$ such that the uniform measure on $\mathcal{G}_N$ converges weakly to $\nu$ as $n\to\infty$, and let $n/m\to\lambda \in(0,\infty)$ as $n\to\infty$. Then   under $\mathcal{H}_0$,
\begin{equation}\label{limit}
D_{n,m}\stackrel{\mathcal{D}}{\to} \int_{\R^p} Z^2(\bb{t})w(\bb{t}) \mathrm{d} \bb{t},
\end{equation}
as $n\to\infty$, where $\{Z(\bb t), \bb{t}\in\R^p\}$ is a centered Gaussian process with a covariance function 
\begin{equation}\label{eq:R}
R(\bb t_1,\bb t_2) = \E Z(\bb t_1)Z(\bb t_2) =  \mathsf{cov}\left(\cos(\bb t_1^\top \tY)+\sin(\bb t_1^\top \tY),\cos(\bb t_2^\top \tY)+\sin(\bb t_2^\top \tY)\right), 
\end{equation}
where $\tY$  is a random vector with distribution $\nu$. 
\end{theorem}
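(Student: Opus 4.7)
The plan is to first reduce the statistic to one based on i.i.d.\ samples from the reference distribution $\nu$, then invoke a functional central limit theorem for the empirical characteristic function, and finally apply the continuous mapping theorem together with a symmetry argument to identify the limit.

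First, I would introduce the population OMT $\bb{G}^*$ pushing $F_0$ to $\nu$ and set $\tY_i := \bb{G}^*(\tX_i)$ and $\tY_j^{(0)} := \bb{G}^*(\tX_j^{(0)})$, which are i.i.d.\ $\nu$ under $\mathcal{H}_0$. Writing $\widetilde{\phi}_n,\widetilde{\phi}_m^{(0)}$ for the empirical CFs based on the $\tY$'s and $\widetilde{D}_{n,m}$ for the analogous statistic, I would show $D_{n,m}-\widetilde{D}_{n,m}=o_P(1)$ via the Lipschitz bound $|\mathrm{e}^{\ti\bb{t}^\top\bb{x}}-\mathrm{e}^{\ti\bb{t}^\top\bb{y}}|\le\|\bb{t}\|\,\|\bb{x}-\bb{y}\|$, the routine inequality $\bigl|\,|A|^2-|B|^2\bigr|\le(|A|+|B|)|A-B|$, and the uniform consistency \eqref{eq:GC}. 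This coupling step will be the principal obstacle, because \eqref{eq:GC} carries no explicit rate; my strategy would be to truncate the $\bb{t}$-integral at $\|\bb{t}\|\le T$, where uniform a.s.\ convergence applies directly, and to control the tail $\|\bb{t}\|>T$ via $\int_{\|\bb{t}\|>T}\|\bb{t}\|^2 w(\bb{t})\,\mathrm{d}\bb{t}\to 0$ combined with tightness of the scaled centred process.

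Next, with $\phi_\nu$ the CF of $\nu$, I would decompose
\begin{equation*}
\sqrt{\tfrac{nm}{n+m}}\bigl(\widetilde{\phi}_n-\widetilde{\phi}_m^{(0)}\bigr)=\sqrt{\tfrac{m}{n+m}}\,\sqrt{n}\,(\widetilde{\phi}_n-\phi_\nu)-\sqrt{\tfrac{n}{n+m}}\,\sqrt{m}\,(\widetilde{\phi}_m^{(0)}-\phi_\nu),
\end{equation*}
with independent summands, and apply a standard functional CLT for the empirical CF of i.i.d.\ samples in $L^2(w;\mathbb{C})$ (compactness of $\mathrm{supp}(\nu)$ and $\int w<\infty$ suffice). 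Writing $A^{(n)}$, $B^{(n)}$ for the real and imaginary parts of the scaled left-hand side, the joint limit is a centred bivariate Gaussian process $(A,B)$ whose covariance is inherited from $\nu$ and the limiting ratio $\lambda$. Since the map $f\mapsto\int_{\R^p}|f(\bb{t})|^2 w(\bb{t})\,\mathrm{d}\bb{t}$ is continuous on $L^2(w;\mathbb{C})$, the continuous mapping theorem together with the coupling step delivers $D_{n,m}\stackrel{\mathcal{D}}{\to}\int(A^2+B^2)\,w\,\mathrm{d}\bb{t}$.

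Finally, to match \eqref{limit}, I would exploit that almost every sample path of $A$ is even in $\bb{t}$ and almost every sample path of $B$ is odd in $\bb{t}$ (since $\cos(\bb{t}^\top\tY)$ is even and $\sin(\bb{t}^\top\tY)$ is odd in $\bb{t}$); thus $A(\bb{t})B(\bb{t})$ is odd and the symmetry of $w$ forces $\int 2A(\bb{t})B(\bb{t})\,w(\bb{t})\,\mathrm{d}\bb{t}=0$ almost surely. Setting $Z:=A+B$ produces a centred real Gaussian process whose covariance coincides with $R$ in \eqref{eq:R}, and $\int Z^2 w=\int(A^2+B^2)w$, completing the identification. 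An alternative route avoiding the first step altogether would exploit the exchangeability underlying Lemma~\ref{lem1}: the pooled ranks form a uniform random permutation of $\mathcal{G}_N$, so $\widehat{\phi}_n$ is a sample-without-replacement mean from $\{\mathrm{e}^{\ti\bb{t}^\top\bb{g}_i}\}_{i=1}^{N}$, and a H\'ajek-type functional CLT combined with the weak convergence of the uniform measure on $\mathcal{G}_N$ to $\nu$ would deliver the joint convergence directly on the original statistic, bypassing the rate issue entirely.
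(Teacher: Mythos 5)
Your primary route has a genuine gap at the coupling step, and it is exactly where you suspect trouble, but your proposed fix does not address it. Replacing the empirical ranks $\widehat{\bb{G}}_N(\tX_i)$ by the oracle points $\tY_i=\bb{G}^*(\tX_i)$ inside a statistic scaled by $\sqrt{nm/(n+m)}\asymp\sqrt{n}$ requires, with your Lipschitz bound, $\sqrt{n}\,\max_{i\le N}\|\widehat{\bb{G}}_N(\tZ_i)-\bb{G}^*(\tZ_i)\|=o_P(1)$, i.e.\ a rate $o_P(n^{-1/2})$ for the empirical transport map. The convergence \eqref{eq:GC} gives no rate at all, and known rates for empirical OT maps are dimension-dependent and slower than $n^{-1/2}$ for $p\ge 2$, so this step cannot be justified by the tools you cite (and is likely false as a crude bound, since you control the two samples' errors separately and forgo any cancellation). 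Truncating the $\bb t$-integral at $\|\bb t\|\le T$ is irrelevant here: on the truncated region the bound is still $T\sqrt{n}\,\max_i\|\cdot\|$, so the obstruction is the $\sqrt{n}$ scaling, not the tail of $w$. The rest of your architecture (two-sample CLT for empirical CFs of i.i.d.\ $\nu$-samples, continuous mapping in $L^2(w)$, the even/odd symmetry killing the cross term so that $Z=A+B$ has covariance $R$) is sound and does identify the correct limit, but it all hinges on the unproved reduction $D_{n,m}-\widetilde{D}_{n,m}=o_P(1)$.

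The one-sentence alternative you mention at the end is, in substance, the paper's actual proof, and it is the right way to bypass the rate issue: under $\mathcal{H}_0$ the pooled ranks are exactly a uniform random permutation of $\mathcal{G}_N$, so $\sqrt{nm/(n+m)}\,(\widehat{\phi}_n-\widehat{\phi}^{(0)}_m)$ equals in distribution a simple linear rank (sampling-without-replacement) statistic in the grid points; Hoeffding's combinatorial CLT (the H\'ajek-type CLT you invoke) plus weak convergence of the uniform measure on $\mathcal{G}_N$ to $\nu$ gives the finite-dimensional limits with covariance $R$, exactly and for every $n$, with no appeal to $\bb{G}^*$ or \eqref{eq:GC}. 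What your sketch still omits, and what the paper supplies, is the passage from finite-dimensional convergence to convergence of the integral functional: an $L^2(w)$ tightness criterion on compact $\bb t$-sets (verified via uniform second-moment bounds and a Lipschitz-in-$\bb t$ increment bound using $\frac{1}{N}\sum_i\|\bb g_i\|^2\to\int\|\tx\|^2\,\mathrm{d}\nu(\tx)<\infty$), followed by a truncation of the $\bb t$-integral using $\E Z_{n,m}^2(\bb t)\le M$ and $\int w<\infty$. If you develop that second route with these two additional ingredients, you obtain a complete proof; as written, the primary argument fails and the correct alternative is only asserted, not executed.
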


Theorem~\ref{th2} shows that the test based on $D_{n,m}$ is also asymptotically distribution-free, and the limiting distribution of $D_{n,m}$ is the same as the distribution of
$
D=\sum_{k=1}^{\infty} \lambda_k U_k^2$, where $U_k$ are iid standard normal variables and $\lambda_1\geq \lambda_2\geq \dots$ are 
 constants that depend on the weight function~$w$ and the reference measure $\nu$.
For a specified $w$, the asymptotic distribution from Theorem~\ref{th2} can be simulated and, subsequently, 
one can obtain asymptotic critical values $c_{\alpha}$, as described in Algorithm~\ref{alg0} in  the Appendix.
However, this procedure is numerically demanding for larger dimensions, so we rather recommend computing the critical values $c_{n,m,\alpha}$ by Monte Carlo simulations.

\begin{table}[htbp]
\scriptsize
\caption{Critical values $c_{\alpha}$ and $c_{n,m,\alpha}$ for level $\alpha=0.05$ dimension $p=2$, two types of grids  (rectangular $\mathcal{G}_N^R$ and spherical  $\mathcal{G}_N^S$), weighting by $C_{a,\gamma}$ in \eqref{eq:Cw} with $\gamma=2$, sample sizes $n$ and $m$. 
The asymptotic critical values $c_\alpha$ were computed using Algorithm~\ref{alg0} provided in Appendix~\ref{app:d} with $K=8$, $M=2\,000$, $G=8\,000$, and $B=40\,000$. The finite sample critical values $c_{n,m,\alpha}$ were
computed from $10\,000$ Monte Carlo simulations. 
}
\label{tab:crit}
\begin{tabular}{lc|ccccccc} 
\toprule
  & & & &$m=200$&&&$m=500$& \\ 
  \cmidrule(r){4-6}\cmidrule(r){7-9}
  & \multicolumn{1}{c|}{$a$} & Theorem~\ref{th2} & $n=20$&$n=50$&$n=80$&$n=20$&$n=50$&$n=80$ \\ 
  \cmidrule(r){1-9}
 &0.5 &  0.2270  &0.2262&0.2207&0.2240&0.2228&0.2255&0.2213 \\ 
 &1.0   &  0.7000  &0.6861&0.7020&0.6859&0.6850&0.6934&0.6929 \\ 
 $\mathcal{G}_N^R$&2.0 & 1.3542    &1.3130&1.3417&1.3496&1.3183&1.3458&1.3262 \\ 
 &3.0 &  1.4926  &1.4562&1.5000&1.4839&1.4573&1.4825&1.4745 \\ 
 &4.0 &  1.4899  &1.4401&1.4691&1.4555&1.4612&1.4839&1.4833 \\ 
 \cmidrule(r){2-9}
 &0.5 &   0.3936 &0.3773&0.3870&0.3815&0.3855&0.4079&0.3894 \\ 
 &1.0 &  0.9504  &0.9542&0.9518&0.9546&0.9338&0.9405&0.9536 \\ 
 $\mathcal{G}_N^S$&2.0 & 1.3844   &1.3784&1.3803&1.3794&1.3725&1.3717&1.3616 \\ 
 &3.0 & 1.4286   &1.4154&1.4237&1.4216&1.4335&1.4432&1.4416 \\ 
 &4.0 &  1.4059  &1.3831&1.3773&1.4013&1.4004&1.3719&1.3784 \\ 
\botrule
 \end{tabular}
\end{table}

Table~\ref{tab:crit} compares the asymptotic critical values $c_{\alpha}$ with the finite sample critical values $c_{n,m,\alpha}$ 
for dimension $p=2$, 
significance level $\alpha=0.05$, and various values of sample sizes $n,m$. 
The  weighting corresponds to $C_{a,\gamma}$ from \eqref{eq:Cw}  with $\gamma=2$.
It is visible that  $c_{n,m,\alpha}$ are close to $c_{\alpha}$ even for rather small values of $n$.

The condition $n/m \rightarrow \lambda \in (0, \infty)$  excludes non-standard cases in which the two sample sizes grow to infinity at different rates, while imposing no restriction on practical applications with a finite sample size $n$. The effect of the choice of $m$ on the power of the test is explored in the Monte Carlo simulation study in Section~\ref{sec_6}.

\subsection{Behavior under alternatives}

Recall that $\mathcal{P}_{\bb X}$ stands for the distribution of $\bb X_i$ and let   $\mathcal{P}_0$ be the distribution corresponding to $F_0$.  Consider now the alternative  $\mathcal{H}_1: F_{\bb X} \ne F_0$.
 In this case, $ \mathcal{P}_{\bb X} \ne \mathcal{P}_0$ and 
the pooled data $\bm{\mathcal{Z}}_N =\bm{\mathcal{X}}_n \cup \bm{\mathcal{X}}_m^{(0)}$ consist of two independent samples from two different distributions $\mathcal{P}_{\bb X}$ and $\mathcal{P}_0$.  The existence and uniqueness of the mapping $\widehat{\tG}_N$ are still guaranteed (by standard arguments).  Asymptotically,  $\widehat{\tG}_N$  behaves  as the solution to \eqref{eq:monge} for $\mu$ being a mixture of $\mathcal{P}_{\bb X}$  and $\mathcal{P}_{0}$. The next theorem describes the asymptotic behavior of the test statistic $D_{n,m}$ in this situation.

We still assume that  $\nu$ is absolutely continuous  with a compact support $\mathcal{S} \subset \R^p$,  
$N=n+m$,   and both $\mathcal{P}_{\bb X}$ and $\mathcal{P}_0$ are
absolutely continuous.

\begin{theorem}\label{th3}
Let both $\mathcal{P}_{\bb X}$ and $\mathcal{P}_0$  have finite second moments. Let $\{\mathcal{G}_N\}$  be a sequence of grids  on a compact set $S$ such that the uniform measure on $\{\mathcal{G}_N\}$  converges weakly to $\nu$ as $n\to \infty$ and $n/m\to\lambda \in (0,\infty)$. Let the weight function $w$ satisfy~\eqref{weight} and $\int_{\R^p} \|\bb t\| w(\bb t) \mathrm{d} \bb t <\infty$. 
Let $\bb G$ be the solution  to \eqref{eq:monge}  for $\mu = \frac{1}{1+\lambda} \mathcal{P}_{\bb X} + \frac{\lambda}{1+\lambda} \mathcal{P}_0$. 
Then as $n\to \infty$,
\begin{equation}\label{eq:alt}
\frac{D_{n,m}}{n+m} \stackrel{\mathsf{P}}{\to} \frac{\lambda}{(1+\lambda)^2}  \int_{\R^p}\left| \exp\big\{\ii\bb t^\top \bb G(\bb X_1)\big\} - \exp\big\{\ii \bb t^\top \bb G(\bb X_1^{(0)})\big\}\right|^2 w(\bb t) \mathrm{d} \bb t.
\end{equation}
 In particular, if $\mathcal{P}_{\boldsymbol{X}} \ne \mathcal{P}_0$ and $w$ is positive on an open neighborhood of $\bb 0$, then the right hand side of \eqref{eq:alt} is positive, and the test based on $D_{n,m}$ is consistent for $n \to \infty$.
\end{theorem}


The quantity on the right hand side of \eqref{eq:alt} is a positive multiple of the weighted $L_2$ comparison of the characteristic functions of 
$\bb G(\bb X_1)$  and $\bb G(\bb X_1^{(0)})$. 
 The assumptions of Theorem~\ref{th3} ensure that if $\mathcal{P}_{\bb X} \ne \mathcal{P}_0$, then the same holds for $\bb G(\bb X_1)$  and $\bb G(\bb X_1^{(0)})$. This phenomenon is illustrated by the empirical transform $\widehat{\bb G}_N$ in  Figure~\ref{fig:alt} in the Appendix~\ref{sec:grids}, which graphically compares 
 $\widehat{\bb G}_N(\bb X_i)$ and $\widehat{\bb G}_N(\bb X_j^{(0)})$ under various alternatives $\mathcal{P}_{\bb X} \ne \mathcal{P}_0$.


Note that it may be possible to weaken the assumption of finite second-order moments similarly to \cite{hallin2021}, but we do not explore this issue further.

\section{Test for a composite null hypothesis}\label{sec_5} 

This section addresses the problem of testing GoF to parametric families of distributions. Let 
$
\mathcal{F} = \{ F_{\boldsymbol\vartheta},  \boldsymbol\vartheta \in \Theta\}
$
be a family of distributions indexed by a   parameter $\bb \vartheta$ taking values in $\Theta\subseteq \mathbb R^q, \ q\geq 1$.
We wish to test the composite null hypothesis
\begin{equation}
 \label{null1} 
\mathcal{H}_0^C: F_{\boldsymbol X} \in \mathcal{F}
\end{equation} 
against a general alternative $\mathcal{H}_1^C: F_{\boldsymbol X} \not\in \mathcal{F}$.

The idea of the goodness-of-fit criterion constructed as a  two-sample comparison of the original data $\bm{\mathcal{X}}_n$ and an artificial sample can be extended to this composite null hypothesis $\mathcal{H}_0^C$. However, as the true value of the parameter $\bb \vartheta$ is unknown, the artificial sample is simulated from a distribution with parameter that is estimated from the data $\bm{\mathcal{X}}_n$. Such approach is typical in various
parametric bootstrap procedures.

Let $\widehat{\boldsymbol\vartheta}=\widehat{\boldsymbol\vartheta}_n$ be an estimator of $\bb{\vartheta}$ constructed from the sample ${\boldsymbol{\cal{X}}}_n$. 
Let ${\boldsymbol {\cal{X}}}_{m}^{(\widehat{\boldsymbol\vartheta})}:=\big\{\boldsymbol X_1^{(\widehat{\boldsymbol\vartheta})},\dots,\boldsymbol X_m^{(\widehat{\boldsymbol\vartheta})}\big\}$ be a 
Monte Carlo sample simulated from $F_{\widehat{\boldsymbol\vartheta}}$. 
We propose to evaluate the  composite hypothesis figuring in \eqref{null1} via a test statistic from \eqref{ranktest} computed for ${\boldsymbol{\cal{X}}}_n$ and  ${\boldsymbol {\cal{X}}}_{m}^{(\widehat{\boldsymbol\vartheta})}$.
Consider the pooled sample $ {\boldsymbol{\cal{X}}}_n \cup {\boldsymbol {\cal{X}}}_{m}^{(\widehat{\boldsymbol\vartheta})}$, and let ${\boldsymbol {\cal R}}_{n}$ and ${\boldsymbol {\cal R}}_{m}^{(\widehat{\boldsymbol\vartheta})}$  be the collection of multivariate ranks associated with observations in ${\boldsymbol {\cal{X}}}_{n}$ and  
${\boldsymbol {\cal{X}}}_{m}^{(\widehat{\boldsymbol\vartheta})}$, respectively, in this pooled sample. Define
 \begin{equation} \label{ranktest1}
\widetilde{D}_{n,m}=D_{n,m}({\boldsymbol {\cal{R}}}_{n},{\boldsymbol {\cal{R}}}^{(\widehat{\boldsymbol\vartheta})}_{m})=\frac{nm}{n+m}  \int_{\mathbb R^p} |
\widehat{\phi}_n({\boldsymbol{t}})-\widehat{\phi}^{(\widehat{\boldsymbol\vartheta})}_{m}({\boldsymbol{t}})|^2 \ w({\boldsymbol{t}}) {\rm{d}}{\boldsymbol{t}},
 \end{equation}
where $\widehat{\phi}^{(\widehat{\boldsymbol\vartheta})}_{m}$ is an empirical CF computed  from  ${\boldsymbol {\cal R}}_{m}^{(\widehat{\boldsymbol\vartheta})}$. 
In other words, $\widehat{\phi}^{(\widehat{\boldsymbol\vartheta})}_{m}(\cdot)$ is the analogue of $\widehat{\phi}^{(0)}_m(\cdot)$ figuring in \eqref{ranktest}, with extra randomness induced
by estimation of the parameter $\bm \vartheta$.
A closed-form expression, analogous to that in \eqref{sum}, is again available if $w$ is chosen appropriately.

As before, large values of  $\widetilde{D}_{n,m}$ indicate violation of the null hypothesis. An important difference is that the test based on $\widetilde{D}_{n,m}$ is no longer distribution free, because the distribution of the artificial sample $\bb{\mathcal{X}}_m^{(\hatt)}$ depends on the distribution of $\hatt$.    This issue is discussed in more detail in Remark~\ref{rem:s4} below.  Nevertheless, 
 the null hypothesis $\mathcal{H}_0^C$ is rejected if
\[
\widetilde{D}_{n,m}>\widetilde{c}_{n,m,\alpha},
\]
where $\widetilde{c}_{n,m,\alpha}$ is 
such that the test keeps the prescribed level $\alpha$. 
Namely, the critical value $\widetilde{c}_{n,m,\alpha}$ can be obtained using   the bootstrap procedure described in Algorithm~\ref{alg1}.

\begin{algorithm}[htbp]
\caption{Bootstrap procedure for calculation of $\widetilde{c}_{n,m,\alpha}$.}
\label{alg1}
\begin{algorithmic}[1]
\Require A sample $\boldsymbol{\mathcal{X}}_n=\{\tX_1, \dots, \tX_n\}$ and a family of distributions $\mathcal{F}$. Weight function $w$ or function $C_w$. 
\State compute the estimator  $\widehat{\boldsymbol\vartheta}$ under $\mathcal{H}_0^C$ from data $\boldsymbol{\mathcal{X}}_n$ 
\State generate $\boldsymbol {\cal{X}}_{m}^{(\widehat{\boldsymbol\vartheta})}$ from $F_{\widehat{\boldsymbol\vartheta}}$
\State  compute the value of the test statistic $\widetilde{D}_{n,m}$ 
\For{$b=1$ to $B$}
\State simulate data $\boldsymbol{\mathcal{X}}_n^b$ of size $n$ as iid from $F_{\widehat{\boldsymbol\vartheta}}$ 
\State  compute the estimate $\widehat{\boldsymbol\vartheta}^b$ from  $\boldsymbol{\mathcal{X}}_n^b$ 
\State  generate $\boldsymbol {\cal{X}}_{m}^{(\widehat{\boldsymbol\vartheta}^b,b)}$ from $F_{\widehat{\boldsymbol{\vartheta}}^b}$
\State compute
the corresponding bootstrap test statistic $\widetilde{D}_{n,m}^b=D_{n,m}({\boldsymbol {\cal{R}}}^{(\widehat{\boldsymbol\vartheta},b)}_{n},{\boldsymbol {\cal{R}}}^{(\widehat{\boldsymbol\vartheta}^b_n,b)}_{m} )$
\EndFor
\State set $\widetilde{c}_{n,m,\alpha}$ as $(1-\alpha)$-sample quantile of  $\widetilde{D}_{n,m}^1,\dots,\widetilde{D}_{n,m}^B$
\Ensure critical value $\widetilde{c}_{n,m,\alpha}$
\end{algorithmic}
\end{algorithm}

\begin{remark}\label{rem:s4}
The procedure is designed such that the artificial observations $\boldsymbol{\mathcal{X}}_{m}^{(\hatt)} =\{\boldsymbol{X}_1^{(\widehat{\boldsymbol\vartheta})},\dots,\boldsymbol X_m^{(\widehat{\boldsymbol\vartheta})}\}$ are mutually conditionally independent given $\widehat{\boldsymbol\vartheta}$. Unconditionally, they are dependent on the original data $\boldsymbol{X}_1,\dots,\boldsymbol{X}_n$, so  
generally unconditionally dependent. Furthermore, $\boldsymbol{X}_j^{(\hatt)}$ is distributed according to $F_{\widehat{\boldsymbol\vartheta}}$, given  $\widehat{\boldsymbol\vartheta}$, and so the unconditional distribution of $\bb X_j^{(\hatt)}$ is not $F_{\bb X}$. 
Hence, the pooled sample
\[
\bm{\mathcal{Z}}_N = \boldsymbol{\mathcal{X}}_n \cup \boldsymbol{\mathcal{X}}_m^{(\hatt)} = \{\bb X_1,\dots,\bb X_n, \bb X_1^{(\hatt)},\dots,\bb X_m^{(\hatt)}\}
\]
is a collection of variables that are not independent nor identically distributed. Moreover, the latter causes that the joint distribution of the pooled sample is not exchangeable. 
Consequently, the pooled multivariate ranks $\boldsymbol{\mathcal{R}}_n \cup \boldsymbol{\mathcal{{R}}}_m^{(\hatt)}$ are not exchangeable under the composite null hypothesis $\mathcal{H}_0^C$. This causes that the test based on $\widetilde{D}_{n,m}$ is not completely distribution free for finite $n,m$. The proof of Theorem~\ref{th2} relies on the permutation central limit theorem which is now not applicable due to the fact that the null distribution of  $\bb{\mathcal{R}}_n \cup \bb{\mathcal{{R}}}_m^{(\hatt)}$ is not exchangeable (i.e. permutation invariant).

If the estimator $\hatt$ is consistent, then under the null hypothesis, the pooled empirical measure on $\bb{\mathcal{ X}}_n \cup\bb{\mathcal{X}}_m^{(\hatt)}$ converges in probability to $F_{\bb X}$. This means that the pooled sample behaves asymptotically as a random sample from $F_{
\tX}$, and the empirical optimal transport $\widehat{\bb G}_N$ should be close to the true transport $\bb G^*$ of $F_{\tX}$ for large $n,m$. However, it is expected that the distribution of $\widetilde{D}_{n,m}$ is generally affected by the distribution of $\hatt$, as this is often the case in analogous goodness-of-fit procedures. A formal treatment of this would require limit theorems for the empirical transport of a collection of dependent and non-identically distributed data, that are, however, currently unavailable. Therefore, we leave the derivation of the asymptotic distribution of $\widetilde{D}_{n,m}$ as an open problem for future work.     

\end{remark}

\section{Testing an elliptical family} 

A special case of \eqref{null1} is obtained for $\mathcal{F}$ being a family of elliptical distributions on $\R^p$ with a specified radial distribution, see \citet[Section 1.2]{babic2021optimal} for an extensive list of related tests to this problem.  We say that a random vector $\bb X$ has an elliptical distribution $\mathcal{E}_p(\varphi,\bb \mu, \bb \Sigma)$ if it can be represented as 
\begin{equation}\label{eq:ellip}
\bb X = \bb \mu + R \bb A \bb S,
\end{equation}
where $\bb \mu \in \R^p$, $\bb A$ is a $p\times p$ matrix such that $\bb A \bb A^\top = \bb \Sigma$, $\bb S$ is a random vector uniformly distributed on the unit sphere in $\R^p$ and $R\geq 0$ is a radial random variable with density $\varphi$, independent of $\bb S$.    

Let $\varphi$ be specified. Then  the aim is to test that $F_{\tX}$ is a DF of $\mathcal{E}_p(\varphi,\bb \mu, \bb \Sigma)$  for some (unspecified) $\boldsymbol{\mu}$ and  $\boldsymbol{\Sigma}$.
A very important special case is obtained for $\varphi$ being the density of $\chi$ distribution with $p$ degrees of freedom, which leads to the class of $p$-variate normal distributions, 
and testing \eqref{null1} is testing normality. 

Following the procedure from Section~\ref{sec_5}, the artificial sample
 ${\boldsymbol {\cal{X}}}_{m}^{(\widehat{\boldsymbol\vartheta})}$ could be taken as a sample simulated from 
$\mathcal{E}_p(\varphi,\widehat{\boldsymbol{\mu}},\widehat{\boldsymbol{\Sigma}})$ for $\widehat{\boldsymbol{\mu}}$ and $\widehat{\boldsymbol{\Sigma}}$ being some suitable estimates of $\bb \mu$ and $\bb \Sigma$, respectively. 
The test based on $\widetilde{D}_{n,m}$ then proceeds as described above. 
For the special case of the normal distribution, 
one typically takes
$\widehat{\boldsymbol{\mu}}$ and $\widehat{\boldsymbol{\Sigma}}$ as the sample mean and sample covariance matrix of $\boldsymbol{\mathcal{X}}_n$, respectively, but some more robust estimators of $\boldsymbol{\mu}$ and $\boldsymbol{\Sigma}$ can also be considered. 


\medskip

However, for elliptical families, we propose also an alternative method 
to testing \eqref{null1} that removes the additional randomness in $\widetilde{D}_{n,m}$ arising from sampling the reference sample ${\boldsymbol {\cal{X}}}_{m}^{(\widehat{\boldsymbol\vartheta})}$.
Namely, 
instead of simulating independent random observations from $\mathcal{E}_p(\varphi,\widehat{\boldsymbol{\mu}},\widehat{\boldsymbol{\Sigma}})$, we propose to take the reference data as a grid that is fixed and non-random for given $\widehat{\boldsymbol{\mu}}$ and $\widehat{\boldsymbol{\Sigma}}$ and that corresponds to the structure of $\mathcal{E}_p(\varphi,\widehat{\boldsymbol{\mu}},\widehat{\boldsymbol{\Sigma}})$.
The construction of the grid is based on the representation \eqref{eq:ellip}. Let $H_{p}^{-1}$ be the quantile function of $R$ (which is known because the density $\varphi$ of $R$ is assumed to be known).  Let $U$ be uniformly distributed on $[0,1]$ and independent of $\bb S$ from \eqref{eq:ellip}. 
Then the random vector $\boldsymbol{Y} = U \boldsymbol{S}$ has spherically uniform distribution in the unit ball, and   $H_p^{-1}(U)$ has the same distribution as $R$, so
\[
\boldsymbol{Z} = \boldsymbol{\mu}+\boldsymbol{\Sigma}^{-1/2} H_p^{-1}(U)\boldsymbol{S}=
\boldsymbol{\mu}+\boldsymbol{\Sigma}^{-1/2} H_p^{-1}(\|\boldsymbol{Y}\|) \frac{\boldsymbol{Y}}{\|\boldsymbol{Y}\|}
\]
has $\mathcal{E}_p(\varphi,\bb \mu, \bb \Sigma)$ distribution. 

Let $\{\boldsymbol{y}_i\}_{i=1}^m$ be some fixed points from the unit ball such that 
the uniform distribution on  $\{\boldsymbol{y}_i\}_{i=1}^m$ is close to spherically uniform.
For instance, one can take $\{\bm{y}_i\}_{i=1}^m$ as the points from a spherical grid $\mathcal{G}_m^S$ described in Section~\ref{sec_3}.
Define
\begin{equation}\label{eq:xtilde}
\widetilde{\boldsymbol X}_i^{(\widehat{\boldsymbol\vartheta})} = \widehat{\boldsymbol{\mu}}+\widehat{\boldsymbol{\Sigma}}^{-1/2} H_p^{-1}(\|\boldsymbol{y}_i\|)\frac{\boldsymbol{y_i}}{\|\boldsymbol{y}_i\|}, \quad i=1,\dots,m.
\end{equation}
It follows from the above discussion that 
${\widetilde{\boldsymbol{\cal{X}}}}_{m}^{(\widehat{\boldsymbol\vartheta})}=\big\{\widetilde{\boldsymbol X}_1^{(\widehat{\boldsymbol\vartheta})},...,\widetilde{\boldsymbol X}_m^{(\widehat{\boldsymbol\vartheta})}\big\}$ should under the null hypothesis mimic a data set from $\mathcal{E}_p(\varphi,\bb \mu, \bb \Sigma)$. 
Subsequently, the test statistic in \eqref{ranktest1} can be computed for $\boldsymbol{\mathcal{X}}_n$ and ${\widetilde{\boldsymbol{\cal{X}}}}_{m}^{(\widehat{\boldsymbol\vartheta})}$ and their  OMT ranks $\bm{\mathcal{R}}_n$ and $\widetilde{\bm{\mathcal{R}}}_m^{(\hatt)}$. The resulting statistic is denoted as $\widetilde{D}_{n,m}^\star = D_{n,m}(\bm{\mathcal{R}}_n,\widetilde{\bm{\mathcal{R}}}_m^{(\hatt)})$. Large values of  $\widetilde{D}_{n,m}^\star$ indicate violation of the null hypothesis $\mathcal{H}_0^C$.

Even though the set ${\widetilde{\boldsymbol{\cal{X}}}}_{m}^{(\widehat{\boldsymbol\vartheta})}$ is non-random for given $\widehat{\bb \mu}$ and $\widehat{\bb \Sigma}$,   
 the randomness in the estimation of the parameters $\bb \mu$ and $\bb \Sigma$ and the consequent issues from Remark~\ref{rem:s4} remain present. Hence, the test statistic $\widetilde{D}_{n,m}^{\star}$ is not distribution free and
its significance needs to be evaluated using a  bootstrap procedure analogous to that in Algorithm~\ref{alg1}. 
 Nevertheless, compared to $\widetilde{D}_{n,m}$, the test statistic $\widetilde{D}_{n,m}^\star$ depends purely on the original data set $\boldsymbol{\cal X}_n$.  Consequently,  $\widetilde{D}_{n,m}^\star$ is fully reproducible and may be more appealing in practical applications. Moreover,
 the simulation study presented in Section~\ref{sec:composite} indicates that  the test based on $\widetilde{D}_{n,m}^\star$ even seems to be slightly more powerful compared to $\widetilde{D}_{n,m}$ for some settings.

 A graphical comparison of a random sample drawn from $\mathsf{N}_2(\widehat{\bm{\mu}},\widehat{\bm{\Sigma}})$ and a set of points obtained by \eqref{eq:xtilde} is provided in Figure~\ref{fig:grids2}
 in Appendix~\ref{sec:grids}.


\section{Simulation study}\label{sec_6}

The performance of the proposed GoF test is explored in a simulation study. Both types of problems, simple and composite null hypothesis, are considered. We focus on tests of multivariate normality in dimension $p\in\{2,3,4\}$, as this allows for a comparison between 
our approach and existing multivariate normality tests.   However, the optimal transport GoF test based on $D_{n,m}$ or $\widetilde{D}_{n,m}$ can be applied for testing any specified multivariate distribution in any dimension,  as illustrated later in Section~\ref{sec:real}.

For given sample sizes $n,m$, the computation of the test statistic requires a choice of the grid set $\mathcal{G}_N$ and the weight function $w$, or directly the function $C_w$ from \eqref{sum}. 
We present results for $C_w=C_{a,\gamma}$ from \eqref{eq:Cw} for $\gamma=2$ and various choices of $a>0$. The grid is taken either as rectangular $\mathcal{G}_N^R$ or spherical $\mathcal{G}_N^S$, where both types were introduced in Section~\ref{sec_3}.

\subsection{Simple null hypothesis}\label{sec:simple}

For a simple null hypothesis, the distribution function $F_0$ needs to be completely specified, and $D_{n,m}$ is distribution-free. 
The critical values $c_{n,m,\alpha}$ were computed for fixed $w$ and grid $\mathcal{G}_N$ only once from $10\,000$ independent replications. Remark that these Monte Carlo critical values are very close to the approximate asymptotic critical values, 
as already demonstrated in Table~\ref{tab:crit}.

We consider $F_0$ as the DF of the standard normal distribution $\mathsf{N}_p(\boldsymbol{0},\boldsymbol{I})$.  
Under the alternative, the data are simulated either from the uniform distribution on a specified set $A \subset \mathbb{R}^p$, denoted as $\mathsf{U}_A$, or from a $t$ distribution with 3 degrees of freedom. 
In the following, we use the notation $\mathsf{T}_p (\bm{\mu},\bm{\Sigma},d)$ for a $p$-variate $t$ distribution with $d$ degrees of freedom, location vector $\bm{\mu}$ and scale matrix $\bm{\Sigma}$.

 The obtained results for $p=3$ are summarized in Table~\ref{tab:single:p=3}, while results for $p=2$ and $p=4$ are provided in Tables~\ref{tab:single:p=2} and \ref{tab:single:p=4} in Appendix~\ref{app:d}.  
The empirical level and power are computed from 1\,000 simulations 
for the nominal significance level $\alpha=0.05$.
The results show that the empirical level of the test is close to the nominal value. Concerning the empirical power, the choice $a \in \{3,4\}$ can be recommended for the rectangular ranks (grid $\mathcal{G}_N^R$) and $a\in\{2,3\}$ seems to be reasonable for the spherical ranks (grid $\mathcal{G}_N^S$), but the optimal value of $a$ depends on the alternative. Clearly, the spherical ranks achieve larger power than the rectangular ranks for
all alternatives considered in Tables~\ref{tab:single:p=3}, \ref{tab:single:p=2}, and \ref{tab:single:p=4}. Therefore, we further focus solely on tests with spherical ranks $\mathcal{G}_N^S$ in the next section.

\begin{table}[htbp]
\scriptsize
\caption{Empirical level and power (in \%) for a goodness-of-fit test for the trivariate normal $\mathsf{N}_3(\boldsymbol{0},\boldsymbol{I})$ distribution (a simple hypothesis). 
$\mathsf{U}_{A}$ denotes the uniform distribution on the set $A$, $\mathsf{T}_3(\bm{0},\boldsymbol{I},3)$ is the trivariate $t$ distribution with 3 degrees of freedom, location $\bm{0}$ and identity scale matrix.
The values are computed from 1\,000 simulations for the nominal significance level $\alpha=0.05$.}
\label{tab:single:p=3}
\begin{tabular}{llr|rrr|rrr} 
\toprule
 & & & \multicolumn{3}{c}{$\mathcal{G}_N^R$}&\multicolumn{3}{c}{$\mathcal{G}_N^S$} \\ 
\cmidrule(r){4-6}\cmidrule(r){7-9}
 &\multicolumn{1}{c}{$m$} & \multicolumn{1}{c|}{$a$} & $n=20$&$n=50$&$n=80$&$n=20$&$n=50$&$n=80$ \\ 
\midrule
 &200&1.0&4.5&4.1&3.7&4.8&3.9&4.1 \\
 &&2.0&5.7&4.1&5.3&5.9&6.3&5.8 \\
 &&3.0&4.8&5.1&4.2&6.1&4.4&4.0 \\
 &&4.0&4.3&4.1&4.6&5.9&5.0&4.6 \\
 \cmidrule{3-3}\cmidrule{4-9} 
 \raisebox{1.5ex}[0pt]{$\mathsf{N}_3(\boldsymbol{0},\boldsymbol{I})$}&500&1.0&4.6&4.1&4.3&5.0&4.7&4.9 \\
 &&2.0&5.7&6.6&6.3&6.4&4.3&5.6 \\
 &&3.0&5.1&3.9&5.3&4.7&4.6&6.5 \\
 &&4.0&4.8&3.6&5.6&3.5&3.8&4.5 \\
 \cmidrule(r){1-3}\cmidrule{4-9}
 &200&1.0&12.1&13.8&21.1&38.2&64.6&81.9 \\
 &&2.0&22.5&45.0&68.0&59.3&90.8&97.9 \\
 &&3.0&22.3&55.0&74.6&58.7&93.9&98.8 \\
 &&4.0&17.4&53.0&76.1&56.5&92.0&97.9 \\
  \cmidrule{3-3}\cmidrule{4-9}
 \raisebox{1.5ex}[0pt]{$\mathsf{U}_{(-2,2)^3}$}&500&1.0&14.2&23.3&34.7&45.9&79.2&94.2 \\
 &&2.0&25.2&63.5&85.3&67.4&95.9&99.9 \\
 &&3.0&26.3&68.8&90.9&68.6&96.1&99.6 \\
 &&4.0&23.7&65.0&89.1&64.5&95.2&99.2 \\
  \cmidrule{1-3}\cmidrule{4-9} 
 &200&1.0&7.0&7.0&7.4&15.9&16.5&21.5 \\
 &&2.0&6.5&7.1&7.3&13.4&18.2&25.1 \\
 &&3.0&5.2&5.7&7.0&8.6&11.2&13.9 \\
 &&4.0&6.3&6.2&6.7&7.0&7.0&7.5 \\
  \cmidrule(r){3-3}\cmidrule{4-9}
 \raisebox{1.5ex}[0pt]{$\mathsf{T}_3(\bm{0},\boldsymbol{I},3)$}&500&1.0&7.1&8.1&5.8&16.3&26.4&34.9 \\
 &&2.0&6.7&10.4&10.5&10.9&23.5&29.6 \\
 &&3.0&5.5&7.9&11.1&10.3&10.5&17.9 \\
 &&4.0&4.1&5.7&9.7&6.8&7.0&10.7 \\
\botrule
\end{tabular}
\end{table}

\subsection{Composite null hypothesis}\label{sec:composite}

In order to investigate the test for a composite null hypothesis $\mathcal{H}_0^C$ in \eqref{null1},  we specify $\mathcal{F}$ as a family of  $p$-variate normal distributions.  
It follows from Section~\ref{sec_5} that for this case, the reference sample can be generated either as 
\begin{itemize}
\item[] (R) \quad  ${\boldsymbol {\cal{X}}}_{m}^{(\widehat{\boldsymbol\vartheta})}$ a sample from $\mathsf{N}_p(\widehat{\boldsymbol{\mu}}, \widehat{\boldsymbol{\Sigma}})$, or 
\item[] (G) \quad  
 ${\boldsymbol {\widetilde{\cal{X}}}}_{m}^{(\widehat{\boldsymbol\vartheta})}$ set of points defined in \eqref{eq:xtilde}  as 
appropriately shifted and rescaled spherically uniform grid points,
\end{itemize}
leading to test statistic $\widetilde{D}_{n,m}$ or $\widetilde{D}_{n,m}^\star$, respectively.

The empirical size of the test is computed for samples generated from normal distributions with various mean vectors and variance matrices, while the power is investigated for data simulated from the uniform distribution on  $[-1,1]^p$ and from the  centered t-distribution with 3 degrees of freedom and identity scale matrix. The dimension is chosen as $p\in\{2,3,4\}$. 
The obtained results are summarized in Tables \ref{tab:comp}--\ref{tab:warp}, and in Table \ref{tab:normality:p=34} in Appendix~\ref{app:d}.

\begin{table}[htbp]
\scriptsize
 \caption{Empirical level and power (in \%) for a test of bivariate normality (a composite hypothesis). Spherical ranks ($\mathcal{G}^S$), reference sample generated either as a random sample (R)  or computed as a 
 set of transformed grid points from \eqref{eq:xtilde} (G). $\mathsf{U}_{A}$ denotes the uniform distribution on the set $A$, $\mathsf{T}_2(\bm{0},\boldsymbol{I},3)$ is the bivariate $t$ distribution with 3 degrees of freedom, location $\bm{0}$ and identity scale matrix.  $\Sigma_{21}=\big(\begin{smallmatrix}
  2 & 1\\
  1 & 1
\end{smallmatrix}\big)$, $1\,000$ simulations.}
\label{tab:comp}

\begin{tabular}{rrr|rrr|rrr|rrr|rrr} 
\toprule 
 & & & \multicolumn{3}{c}{$\mathsf{N}_2(\boldsymbol{0},\boldsymbol{I})$}&\multicolumn{3}{c}{$\mathsf{N}_2 (\boldsymbol{1},\boldsymbol\Sigma_{21})$}&\multicolumn{3}{c}{$\mathsf{U}_{(-1,1)^2}$}&\multicolumn{3}{c}{$\mathsf{T}_2(\bm{0},\boldsymbol{I},3)$} \\[1ex] 
&&& \multicolumn{12}{c}{Sample size $n$}\\
\midrule
 &\multicolumn{1}{c}{$m$} & \multicolumn{1}{c|}{$a$} & $20$&$50$&$80$&$20$&$50$&$80$&$20$&$50$&$80$&$20$&$50$&$80$ \\ 
\cmidrule(r){1-3} \cmidrule(r){4-15}
 R&200&1.0&4.2&5.2&5.2&5.5&5.8&4.1&10.2&6.3&26.0&35.6&60.4&74.8 \\ 
 &&1.5&3.6&5.5&5.9&4.7&5.1&6.3&15.2&41.0&52.5&39.0&73.5&85.3 \\ 
 &&2.0&6.3&5.8&5.5&4.6&6.1&5.7&23.6&20.8&67.7&37.8&72.9&86.9 \\ 
 &&2.5&5.1&5.0&5.5&5.3&5.9&4.7&25.1&56.4&74.6&39.4&71.7&86.5 \\ 
 &&3.0&5.3&5.1&4.3&4.9&5.3&4.5&23.7&54.3&71.5&37.8&69.5&85.9 \\ 
 &&3.5&4.9&5.9&4.8&4.3&4.9&4.4&23.7&52.7&71.7&31.5&69.4&83.9 \\ 
\cmidrule(r){3-3}\cmidrule(r){4-15} 
&1\,000&1.0&5.1&5.6&6.3&4.5&5.2&5.9&9.6&49.8&71.6&49.7&84.5&94.6 \\ 
 &&1.5&3.7&5.9&5.5&5.9&4.0&5.3&21.0&65.8&90.5&48.7&84.4&95.1 \\ 
 &&2.0&5.1&4.4&5.1&6.3&5.0&5.8&27.8&71.6&93.5&46.7&81.3&93.8 \\ 
 &&2.5&6.6&4.4&5.8&5.5&5.0&5.9&27.8&74.3&91.2&42.5&81.0&94.7 \\ 
 &&3.0&5.3&5.9&4.2&6.2&6.2&4.8&28.1&70.3&88.8&38.5&78.7&92.1 \\ 
 &&3.5&4.7&5.3&6.0&4.3&5.7&4.6&27.1&68.7&89.9&39.0&77.2&91.9 \\ 
\midrule
 G&200&1.0&4.3&5.1&3.6&8.3&7.8&7.1&2.4&52.3&93.4&55.0&87.8&96.5 \\ 
 &&1.5&5.8&4.3&5.5&7.3&7.4&5.9&12.5&71.9&96.3&51.4&85.7&96.4 \\ 
 &&2.0&4.5&5.1&5.1&6.9&7.2&7.7&22.9&73.1&95.8&49.2&82.2&95.6 \\ 
 &&2.5&4.5&5.3&5.1&7.4&7.7&7.8&25.7&73.3&94.4&42.0&83.7&94.3 \\ 
 &&3.0&5.7&4.5&3.4&7.1&6.6&7.1&27.8&70.0&92.4&40.4&79.6&92.2 \\ 
 &&3.5&4.7&4.3&6.3&7.1&6.8&8.5&25.7&67.5&90.4&35.4&77.7&93.6 \\ 
 \cmidrule(r){3-3}
\cmidrule(r){3-3}\cmidrule(r){4-15}
 &1\,000&1.0&4.8&4.4&5.4&7.9&7.2&8.7&3.8&53.2&91.8&58.4&91.6&98.6 \\ 
 &&1.5&5.5&5.1&4.4&6.8&7.1&7.6&16.9&70.4&95.9&56.1&88.6&97.9 \\ 
 &&2.0&5.5&5.1&6.3&7.2&9.5&9.6&22.6&77.0&96.8&47.7&85.9&96.3 \\ 
 &&2.5&5.2&6.4&4.2&7.8&10.9&9.1&27.4&76.5&95.0&45.9&84.7&96.4 \\ 
 &&3.0&4.2&6.0&4.7&5.7&7.1&8.8&29.2&73.0&94.3&43.5&82.3&95.6 \\ 
 &&3.5&5.2&4.4&6.5&7.6&7.3&9.0&28.5&69.1&93.1&39.6&80.3&94.8 \\ 
\botrule
\end{tabular}

\end{table}


First, we aim to 
investigate differences between the two approaches (R) and (G) for generating the reference sample and differences between weight functions, represented by the  parameter $a$. 
For the composite hypothesis, it is necessary to use a resampling procedure to calculate  the critical values, but, unfortunately, running Monte Carlo simulations together with the bootstrap approximation described in Algorithm~\ref{alg1}  is prohibitively computationally expensive. At the same time, preliminary simulation results (not presented here) suggest that the dependency of the critical values 
$\widetilde{c}_{n,m,\alpha}$
on the vector $\boldsymbol{\vartheta}$ is not very strong,  and $\widetilde{c}_{n,m,\alpha}$ are close to the critical values obtained for the standard normal distribution provided that $\boldsymbol{\Sigma}$ is not  `extremely far' from the identity matrix. 
Therefore, to obtain the power results in Table~\ref{tab:comp} for dimension $p=2$, the bootstrap critical values were replaced by simpler critical values corresponding to the standard bivariate Normal distribution $\mathsf{N}_2(\boldsymbol{0},\boldsymbol{I})$, listed in Table~\ref{tab:crit:est} in Appendix~\ref{app:d}.  We believe that for this initial comparison of the tuning settings, this simplified 
procedure is satisfactory.  
Indeed, Table~\ref{tab:comp} suggests that this approach works quite well for randomly generated reference sample   in (R), whereas the empirical level obtained by using the grid points in (G)   seems to be somewhat inflated for the non-standard Normal distribution $\mathsf{N}_2 (\boldsymbol{1},\boldsymbol\Sigma_{21})$, where $\Sigma_{21}=\big(\begin{smallmatrix}
  2 & 1\\
  1 & 1
\end{smallmatrix}\big)$, so for (G), the critical values should be computed rather from the proper bootstrap, which is used in Tables~\ref{tab:warp} and \ref{tab:normality:p=34} for $p=2$ and $p\in\{3,4\}$, respectively.

Comparing the empirical power in  Table~\ref{tab:comp} observed for  $\mathsf{U}_{(-1,1)^2}$ and $\mathsf{T}_2(\bm{0},\boldsymbol{I},3)$ distributions, we conclude that a larger reference sample size ($m=1\,000$) is to be recommended. For this choice, the powers for the two approaches (R) and (G) are similar. 
The choice of the tuning parameter $a$ seems to be crucial for the $\mathsf{U}_{(-1,1)^2}$ alternative where, for instance, for the random grid with $m=200$ and $n=80$, the power ranges from 8.0 \% ($a=0.5$) to 71.7~\% ($a=3.5$). For the $\mathsf{T}_2(\bm{0},\boldsymbol{I},3)$ alternative, the differences between powers for different $a$ are smaller. In general, $a$ from the interval $[2,3]$ seems to be a reasonable choice for practical usage.

Finally, Table~\ref{tab:warp} and Table~\ref{tab:normality:p=34} in Appendix \ref{app:d}  compare the test based on $\widetilde{D}_{n,m}^\star$ (with a grid type reference sample) to a set of standard multivariate normality goodness-of-fit tests for dimensions $p=2$ and $p\in\{3,4\}$, respectively.
The results for  $\widetilde{D}_{n,m}^\star$ are computed from the proper bootstrap-based estimates of the critical values $\widetilde{c}_{n,m,\alpha}$.
Since the full bootstrap is computationally too demanding,  the empirical level and power were computed using the so-called warp-speed method, see \citet{warp}.
The reference normality tests are computed using  R library \texttt{MVN} \citep{MVN}. Namely, we present results of  Mardia's multivariate skewness  and kurtosis coefficients, denoted as M3 and M4, respectively, see \cite{mardia1974applications},  Henze-Zirkler's (abbreviated as H-Z) multivariate normality test from \cite{HZ}, Royston's multivariate test of \cite{royston}, Doornik-Hansen's (abbreviated as D-H) multivariate normality test from \cite{Doornik}, and  Energy multivariate normality test of \cite{szekely2013}.

 It is discussed in Section~\ref{sec:comp} that the test statistic ${D}_{n,m}$ is, for some specific weight functions, approximately related to the two sample energy statistic for the OMT ranks, considered for a comparison of two independent samples and rectangular grid in \cite{deb2023}.  Hence, 
Table~\ref{tab:warp} provides also power results for a test 
based on $\widetilde{E}_{n,m}^\star = E_{n,m}(\bb{\mathcal{R}}_n, \widetilde{\bb{\mathcal{R}}}_m^{(\widehat{\bb \vartheta})})$, where $E_{n,m}$ is defined by \eqref{energy} for $\gamma=1$, denoted in the table as D-S. The significance was computed analogously as for the test based on $\widetilde{D}_{n,m}^\star$, using $m=1\,000$. In contrast to \cite{deb2023}, we use spherical ranks that provided better power in  Section~\ref{sec:simple}. The results indicate that the power of the D-S test is comparable to $\widetilde{D}_{n,m}^\star$ with the same sample sizes $n,m$.

 \begin{table}[htbp]
 \scriptsize
 \caption{Empirical level and power (in \%) for a goodness-of-fit test of bivariate normality (a composite hypothesis). 
Spherical ranks ($\mathcal{G}^S$),  reference sample considered as a 
 set of transformed grid points from \eqref{eq:xtilde} (approach (G)). $\mathsf{U}_{A}$ denotes the uniform distribution on the set $A$, $\mathsf{T}_2(\bm{0},\boldsymbol{I},3)$ is the bivariate $t$ distribution with 3 degrees of freedom, location $\bm{0}$ and identity scale matrix. Matrices $\Sigma_{21}=\big(\begin{smallmatrix}
  2 & 1\\
  1 & 1
\end{smallmatrix}\big)$,  $\Sigma_{10,3}=\big(\begin{smallmatrix}
  10 & 3\\
  3 & 1
\end{smallmatrix}\big)$. 
D-S denotes a test that uses the two-sample energy test statistic from \citet{deb2023} with $m=1\,000$ and otherwise the same setup.
The other reference tests are  computed using R library \texttt{MVN}. All quantities are obtained from $1\,000$ simulations. }
\label{tab:warp}
\begin{tabular}{rc|rrr|rrr|rrr|rrr} 
\toprule
  & &\multicolumn{3}{c}{$\mathsf{N}_2 (\boldsymbol{1},\boldsymbol\Sigma_{21})$}&\multicolumn{3}{c}{$\mathsf{N}_2 (\boldsymbol{1},\boldsymbol\Sigma_{10,3})$}&\multicolumn{3}{c}{$\mathsf{U}_{(-1,1)^2}$}&\multicolumn{3}{c}{$\mathsf{T}_2(\bm{0},\boldsymbol{I},3)$} \\[1ex] 
  && \multicolumn{12}{c}{Sample size $n$}\\
   \cmidrule(r){1-14} 
$m$ & $a$ & $20$&$50$&$80$&$20$&$50$&$80$&$20$&$50$&$80$&$20$&$50$&$80$ \\ 
 \cmidrule(r){1-2} \cmidrule(r){3-14} 
&2.0&3.0&5.8&5.8&4.7&5.0&5.4&15.9&75.1&96.3&44.8&85.7&95.5 \\ 
 200&2.5&5.4&4.2&4.9&5.7&7.9&3.7&27.4&73.4&94.3&40.5&80.7&92.8 \\ 
 &3.0&4.5&3.3&5.4&4.5&4.3&3.2&22.9&68.5&94.6&38.0&80.7&92.3 \\ 
  \cmidrule(r){2-2}\cmidrule(r){3-14} 
 &2.0&7.0&4.0&5.6&4.6&4.9&5.0&23.4&77.1&96.1&47.4&85.4&96.3 \\ 
 1\,000&2.5&4.4&4.9&7.3&3.3&5.5&6.0&28.2&78.9&95.4&43.0&83.9&95.7 \\ 
 &3.0&4.2&5.5&5.0&4.3&5.6&4.4&27.9&75.2&93.6&39.4&82.1&95.1 \\ 
  \cmidrule(r){1-2}\cmidrule(r){3-14} 
\multicolumn{2}{l|}{D-S}&6.1&4.3&6.3&5.0&4.8&6.1&27.3&73.6&94.6&48.2&86.9& 97.5\\
  \cmidrule(r){1-2}\cmidrule(r){3-14} 
  \multicolumn{2}{l|}{H-Z}   & 5.0 & 5.5 & 4.7 &5.0&5.5&4.7& 16.8 & 69.8 &  93.4 & 43.2 & 80.4 & 92.3   \\
 \multicolumn{2}{l|}{M3}       & 3.6 & 5.0 & 4.3 &3.6&5.0&4.3&  0.1 &  0.1 &   0.0 & 44.7 & 77.7 & 85.5  \\
 \multicolumn{2}{l|}{M4}       & 0.7 & 1.8 & 2.2 &0.7&1.8&2.2&  0.2 & 63.9 &  97.1 & 34.3 & 88.4 & 97.5  \\
 \multicolumn{2}{l|}{Royston}          &7.0&7.6&5.8& 6.3&6.7&5.6&27.5 & 94.9 & 100.0 & 55.0 & 87.7 & 96.7   \\
 \multicolumn{2}{l|}{D-H}  &4.9&4.9&5.7& 4.2&4.1&4.8& 7.5 & 68.2 &  97.4 & 44.0 & 85.6 & 95.8  \\
 \multicolumn{2}{l|}{energy}           &6.1&6.1&4.4&6.1 &6.1&4.4& 10.0 & 55.8 &  88.0 & 48.8 & 84.9 & 95.4 \\
\botrule
 \end{tabular}
 \end{table}

 Table~\ref{tab:warp} for $p=2$ shows that the empirical level of most of the considered tests is close to the nominal value $\alpha=0.05$, with lower values observed for the M4 test and slightly higher values for the Royston test.   Under the considered alternatives, 
the  results for the candidate tests are rather comparable for the $\mathsf{T}_2(\bm{0},\boldsymbol{I},3)$ alternative, while more visible differences are observed for the uniform alternative $\mathsf{U}_{(-1,1)^2}$. The results indicate 
that the Royston test yields the largest empirical power in most of the cases. Our approach with $a=2$ has a similar power as the the Doornik-Hansen and energy tests for the  $\mathsf{T}_2(\bm{0},\boldsymbol{I},3)$ distribution.
For the uniform $\mathsf{U}_{(-1,1)^2}$ distribution, 
 the M3 test completely fails to detect the alternative. The remaining tests
  (including our approach) have similar power for $n=80$ observations. The Royston test clearly outperforms all other tests for $n=50$ and it achieves similar power as our approach (with $a\in\{2.5,3\}$) for $n=20$. 
     Similar conclusions can be made for dimensions $p=3$ and $p=4$  based on results from Table~\ref{tab:normality:p=34} in Appendix~\ref{app:d}.


Altogether, our approach seems to be similarly powerful as the standard 
multivariate normality tests and, in terms of empirical power, it is significantly outperformed only by the (slightly oversized) Royston test. 


\section{Real-data application}\label{sec:real}

 Understanding the joint distribution of socioeconomic variables is a central problem in econometrics. In particular, the relationship between income and expenditures plays a fundamental role in analyzing economic behavior and inequality. Examining the joint distribution of these two variables allows us to capture important features such as nonlinear dependence, asymmetry, and tail behavior, which are not revealed by simple correlation analysis or linear regression models.

 As an example, we analyze a two-dimensional data set consisting of expenditures per student
 and district average income (both in USD 1,000) in Californian schools \citep{SW2007} in 1998--1999, plotted in Figure~\ref{fig:CAS-plot}. The data are available in the R library \texttt{AER} \citep{AER}.
For illustrative purposes, we consider both the complete data set and a subset consisting of the first 100 observations. As can be seen from Figure~\ref{fig:CAS-plot}, for a sample size of 100, the heavy tails of the distribution are not yet fully apparent, whereas they become clearly visible in the plot of the complete data set. The first 100 observations tend to come from  lower income districts and, therefore, they may not capture all features present in the entire data.
Consequently, one may expect the goodness-of-fit test to yield different results in the two cases. 

Figure~\ref{fig:CAS-hist} in Appendix~\ref{app:d} shows histograms of the two variables. They suggest that the marginal distribution of income exhibits positive skewness relative to the normal distribution, while the skewness of expenditures is more modest.   This feature is already apparent even for the small sample size $n=100$.

\begin{figure}
\begin{center}
\includegraphics[width=0.45\textwidth]{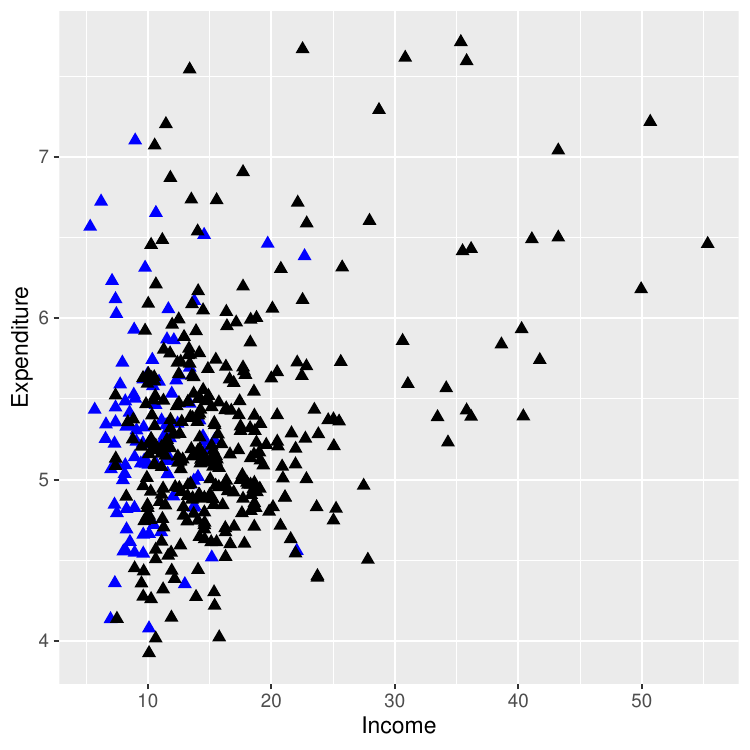}
\includegraphics[width=0.45\textwidth]{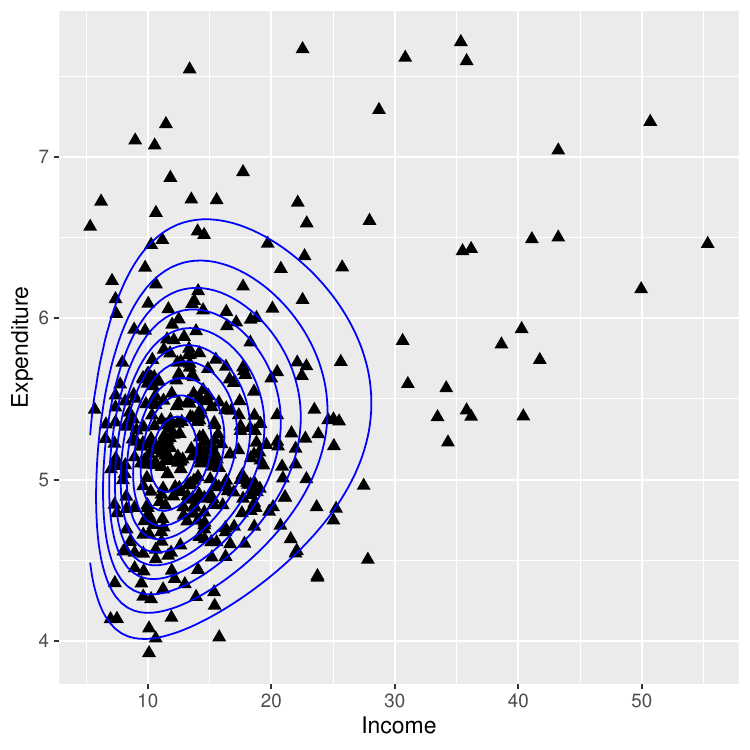}
\end{center}
\caption{The bivariate real data on income and expenditure from the California schools (left panel). The first $n=100$ observations, considered also in the analysis, are stressed by blue color. All the data together with contours of the fitted distribution with lognormal marginals and normal copula (right panel).}\label{fig:CAS-plot}
\end{figure}

 To get flexible models for the data, we define the tested class of distributions $\mathcal{F}$ by specifying parametric models for the two marginals and a parametric copula. The marginals are considered as normal, lognormal, and Gamma,  while the copula is chosen from the normal, Frank or Gumbel families. Since it is reasonable to assume the same marginal model for both variables, this results in nine candidate models for the bivariate distribution.

For each specified $\mathcal{F}$, the goodness-of-fit test is conducted as described in Algorithm~\ref{alg1}. In Steps~1 and~6, the parameter vector $\bb \vartheta$ is estimated by the  two-step Inference Functions for Margins (IFM) method \citep{joe2005}. More precisely, the marginal distribution parameters are estimated first  by maximum likelihood using the function \texttt{fitdistr()} from the R package \texttt{MASS} \citep{MASS}. In the second step, the copula parameter is estimated from the resulting pseudo-observations using the function \texttt{fitCopula()} from the R package \texttt{copula} \citep{copula1,copula2,copula3}.
 The artificial data are generated from the fitted distribution, in steps 2, 5, and 7, by standard procedures.  
Figure~\ref{fig:CAS-plot} (right panel) compares the data with contours of 
the distribution with lognormal marginals and a normal copula, fitted by this procedure.

 The tuning parameters of the GoF test are chosen based on their good performance in the simulation study. In particular,  $m=200$, the grid is spherical and the weight parameter is $a=2.5$, with $B=1\,000$ bootstrap replications.   As mentioned above,  for illustrative purposes, we present results not only for the full data set of $n=420$ observations, but also for a subset of the first $n=100$ records. 

The aim is to compare the results of the test in these two cases, since features such as heavy tails and extreme observations are less pronounced in the smaller sample, whereas they become more evident when the entire data set is considered. Comparing the two cases also illustrates the behavior of the goodness-of-fit test with respect to the amount of available data.
 The resulting p-values are summarized in Table~\ref{CAS-table}. 

\begin{table}
\scriptsize
\caption{P-values for goodness-of-fit tests of the nine candidate models specified by the three marginal distributions (normal, lognormal, Gamma) and three copula families: normal, Frank, and Gumbel. The test is applied to the subset of the first $n=100$ observations as well as to the whole data set ($n=420$). 
The 
GoF test parameters are set as $m=200$, $a=2.5$,  spherical grid, $B=1\,000$ bootstrap replications.}\label{CAS-table}
\begin{tabular}{r|ccc|ccc}
\toprule
  & \multicolumn{3}{c|}{$n=100$} & \multicolumn{3}{c}{$n=420$} \\ 
 & Normal & Frank  & Gumbel  & Normal  & Frank  & Gumbel  \\
\midrule
Normal     & 0.004 & 0.063 & 0.042 & 0.000 & 0.000 & 0.000 \\
Lognormal & 0.580 & 0.078 & 0.773 & 0.515 & 0.068 & 0.028 \\
Gamma     & 0.438 & 0.163 & 0.431 & 0.002 & 0.000 & 0.000 \\
\bottomrule
\end{tabular}
\end{table}

For the first $n=100$ observations, all the three  bivariate distributions with normal marginals 
provide rather a poor fit. This is consistent with the conclusions drawn from  histograms in Figure~\ref{fig:CAS-hist}.  In particular, 
the null hypothesis is rejected 
 at the test level $\alpha=0.05$ for the normal copula ($p$-value 0.004), where  in combination with normal marginals, $\mathcal{F}$ is the class of bivariate normal distributions. 
 A borderline $p$-value is obtained for the remaining two copula families combined with normal marginals. 
On the other hand, lognormal and Gamma marginals 
lead
to an acceptable fit for the first $n=100$ observations when combined with all three copulas,  
with p-values between 0.078 (lognormal marginals with Frank copula) to 0.936 (lognormal marginals with Frank copula).

When testing the entire dataset of $n=420$ observations, the model with lognormal marginals and normal copula seems to provide the best fit, see also the right panel of Figure~\ref{fig:CAS-plot}. A bivariate distribution with lognormal marginals and Frank copula would still be acceptable, while the other models are rejected at level $\alpha=0.05$. 

Based on the results, the distribution with lognormal marginals and a normal copula seems to be an appropriate model for the considered data set. The results showed that with only 100 observations, the goodness-of-fit test was not powerful enough to distinguish between lognormal and Gamma marginal distributions, or between normal, Frank, and Gumbel copulas. This illustrates that the power of the goodness-of-fit test may deteriorate in smaller samples, where important distributional features, such as heavy tails and extreme observations, are not yet fully manifested.

\section{Conclusion}\label{sec:concl}

This paper deals with goodness-of-fit testing for a specified multivariate distribution, and proposes a test statistic that makes use of multivariate ranks derived from the optimal measure transport theory. 
We show that the test of a simple null hypothesis is distribution-free, while a composite null hypothesis  requires a bootstrap approximation of the critical values.

The empirical results presented in Section~\ref{sec_6} show that the proposed test achieves power comparable to that of the most powerful existing normality tests. However, the optimal transport-based goodness-of-fit test is significantly more general, as it can be applied to testing arbitrary multivariate distributions with only minor modifications. This flexibility was demonstrated in Section~\ref{sec:real} for a real data example utilizing separate models for marginal distributions and a copula. 
 The test requires only a procedure for parameter estimation and a method for generating samples from the distribution under the null hypothesis.

\subsection*{Acknowledgments}
The research of Šárka Hudecová and Zdeněk Hlávka was supported by the Czech Science Foundation project GA\v{C}R No.~25-15844S. We would like to also thank the Editor and the Reviewers for their helpful comments and suggestions. Their feedback helped us improve the paper and clarify several parts of the manuscript.

The paper was completed after the sad passing of Simos Meintanis. His contribution was essential, and the remaining authors are sincerely grateful to have had the privilege of collaborating with him.

\section*{Declarations}
{\small
\noindent \textbf{Competing interests:} The authors declare that they have no competing interests.\\
\noindent \textbf{Data availability statement:} The data used in Section~\ref{sec:real} are publicly available through the R package \texttt{AER} from CRAN, \cite{AER}.  
}

\appendix

\section{Grids}\label{sec:grids}

An example of the two types of  grids described in Section~\ref{sec_3}, rectangular $\mathcal{G}_N^R$ and spherical   $\mathcal{G}_N^S$, is provided in Figure~\ref{fig:grids} for $N=500$ and $p=2$.
For dimension $p=2$, the grid points $\bm{g}_i=(g_{i,1},g_{i,2})^\top$ of the spherical grid   $\mathcal{G}_N^S$ are computed simply as
\begin{equation}\label{grid:2d}
g_{i,1} = x_{i,1} \cos(2\pi x_{i,2}), \quad g_{i,2} = x_{i,1} \sin(2\pi x_{i,2}), \quad i=1,\dots,N,
\end{equation}
where $\{\bm{x}_{i}\}_{i=1}^N$ is a Halton sequence in $[0,1]^2$.

Under $\mathcal{H}_0$, the vector of pooled multivariate ranks $\bb{\mathcal{R}}_n \cup \bb{\mathcal{R}}_m^{(0)}$ is uniformly distribution on the set of all permutations of the grid points $\mathcal{G}_N$. On the other hand, Figure~\ref{fig:alt} demonstrates that if the two samples $\bb{\mathcal{X}}_n$ and $\bb{\mathcal{X}}_m^{(0)}$ come from two different distributions, then the distributions of $\bb R_i$ and $\bb R_j^{(0)}$ differ. If the distributions $F_{\bb X}$ and $F_0$ differ in location, then the ranks $\bb R_i$, $i=1,\dots,n$, accumulate in a different part of the grid than $\bb R_j^{(0)}$, $j=1,\dots,m$, so the empirical distributions of $\bb{\mathcal{ R}}_n$ and  $\bb{\mathcal{ R}}_n^{(0)}$ differ in location as well.
Similarly, if $F_{\bb X}$ and $F_0$ differ in scale, then the same holds for the the empirical distributions of $\bb{\mathcal{R}}_n$ and  $\bb{\mathcal{ R}}_n^{(0)}$ , as it is visible from the middle panel of Figure~\ref{fig:alt}. Finally, even if the location and scale of $F_{\bb X}$ and $F_0$  are the same, but one of them is symmetric, while the second one is skewed, then one can clearly see in the right panel of Figure~\ref{fig:alt} that  $\bb R_i$, $i=1,\dots,n$ gather in different parts of the grid than $\bb R_j^{(0)}$, $j=1,\dots,m$.

\medskip

Figure~\ref{fig:grids2} compares a random sample $\bm{\mathcal{X}}_m$ of size $m$ drawn from $\mathsf{N}_2(\bm{\mu},\bm{\Sigma})$   with the set  $\widetilde{\bm{\mathcal{X}}}_m = \{\widetilde{\boldsymbol X}_i \}_{i=1}^m$ where 
$\widetilde{\boldsymbol X}_i $ is computed   as

\begin{equation}\label{eq:xtilde2}
\widetilde{\boldsymbol X}_i = \bm{\mu}+ \boldsymbol{\Sigma}^{-1/2}   H_2^{-1}(\|\boldsymbol{y}_i\|)\frac{\boldsymbol{y_i}}{\|\boldsymbol{y}_i\|}, \quad i=1,\dots,m,
\end{equation}
where $H_2$ is DF of  
a $\chi_2$ distribution  (i.e. the square root of a $\chi^2$ distribution with 2 degrees of freedom) and $\{\bm{y}_i\}_{i=1}^m$ are from $\mathcal{G}_m^S$   from  \eqref{grid:2d}.
 It is visible that the empirical distribution of  $\widetilde{\bm{\mathcal{X}}}_m$ mimics $\mathsf{N}_2(\bm{\mu},\bm{\Sigma})$, but the points are distributed more  regularly than for the random sample $\bm{\mathcal{X}}_m$.

\begin{figure}[htbp]
\begin{center}
\includegraphics[width=0.8\textwidth]{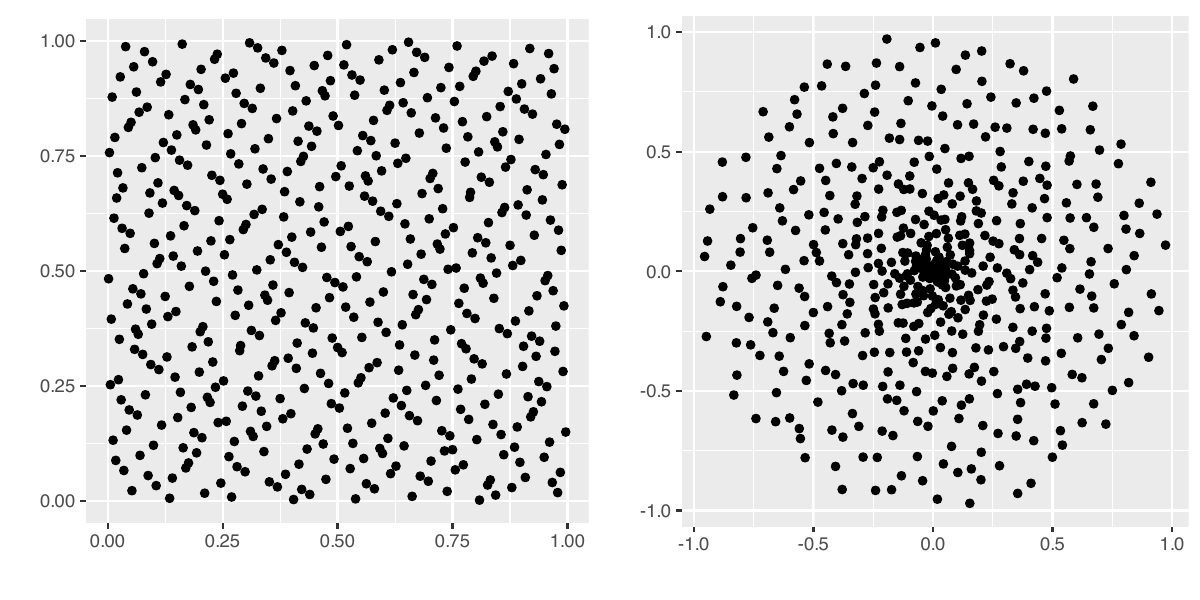}
\end{center}
\caption{A rectangular grid $\mathcal{G}_N^R$ (left panel) and a spherical grid  $\mathcal{G}_N^S$ (right panel) for $N=500$ and $p=2$.}\label{fig:grids}
\end{figure}

\begin{figure}[htbp]
\begin{center}
\includegraphics[width=\textwidth]{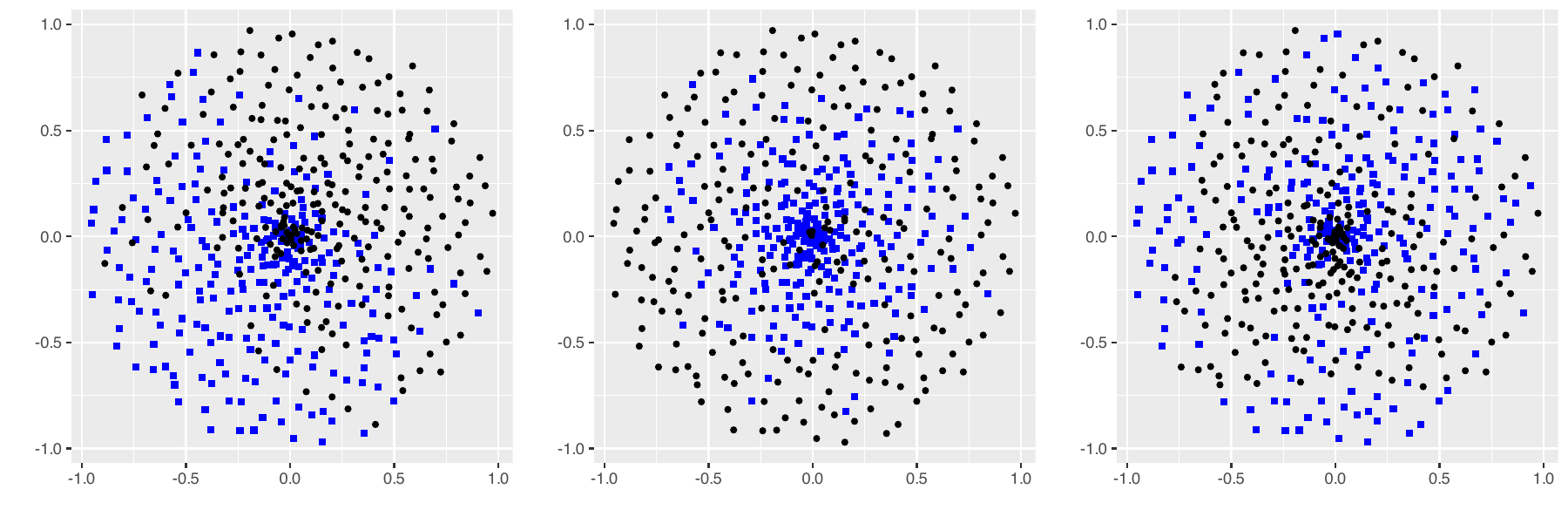}
\end{center}
\caption{Example of differences between $\bb R_i = \widehat{\bb G}_N(\bb X_i)$ (blue squares) and $\bb R_j^{(0)} = \widehat{\bb G}_N(\bb X_j^{(0)})$ (black circles) when $\bb X_i$ and $\bb X_j^{(0)}$ come from two different distributions $F_{\bb X} \ne F_0$ differing in  
 location (left panel) or in scale (middle panel). The right panel shows results for $F_{\bb X}$ being spherically symmetric, while $F_0$ is skewed.  In all cases $n = m = 250$, so $N=500$. 
}\label{fig:alt}
\end{figure}

\begin{figure}[htbp]
\begin{center}
\includegraphics[width=0.8\textwidth]{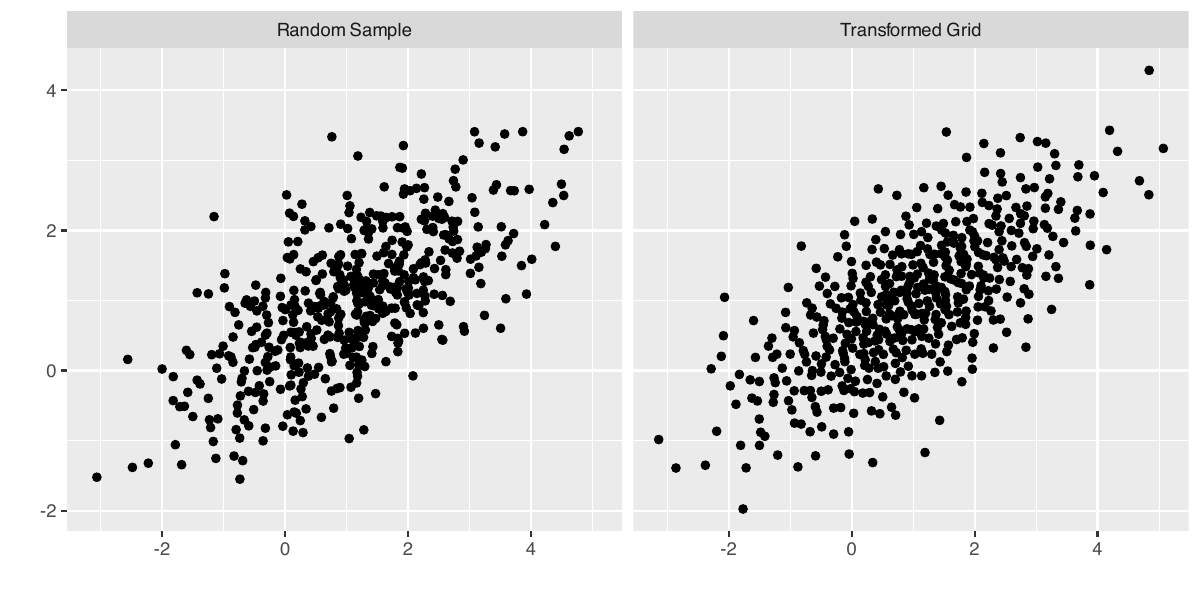}
\end{center}
\caption{A  random sample from $\mathsf{N}_2(\bm{\mu},\bm{\Sigma})$ of size $m$ (left panel) and a set of $m$ transformed grid points from \eqref{eq:xtilde2}
(right panel) for $m=500$, $\bm{\mu}=(1,1)^\top$,  $\Sigma=\big(\begin{smallmatrix}
  2 & 1\\
  1 & 1
\end{smallmatrix}\big)$.
}\label{fig:grids2}
\end{figure}

\section{Proofs}\label{sec:ap1}

\begin{lemma}\label{lem2}
Let $f:\{1,\dots,N\}\to\mathbb C$ be a function and $\pi=(\pi_1,\dots,\pi_N)$ be a permutation of $(1,\dots,N)$. Then for $1<n<N$, 
\[
\frac{1}{n}\sum_{i=1}^{n}  f({\pi_i}) - \frac{1}{N-n}\sum_{i=n+1}^N f({\pi_i}) = \frac{N}{n (N-n)} \left[ \sum_{i=1}^n f(\pi_i) - \frac{n}{N}\sum_{i=1}^N f(i)\right].
\]
\end{lemma}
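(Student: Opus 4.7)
The statement is a purely algebraic identity, so the plan is direct manipulation, exploiting only the fact that $\pi$ is a bijection of $\{1,\dots,N\}$.

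My plan is as follows. I would introduce the shorthand $S_1=\sum_{i=1}^n f(\pi_i)$, $S_2=\sum_{i=n+1}^N f(\pi_i)$, and $S=\sum_{i=1}^N f(i)$. Since $\pi$ is a permutation, the key observation is that
\[
S_1+S_2=\sum_{i=1}^N f(\pi_i)=\sum_{i=1}^N f(i)=S,
\]
so $S_2=S-S_1$.

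Then I would rewrite the left-hand side as
\[
\frac{S_1}{n}-\frac{S_2}{N-n}=\frac{S_1}{n}-\frac{S-S_1}{N-n}=S_1\Bigl(\frac{1}{n}+\frac{1}{N-n}\Bigr)-\frac{S}{N-n},
\]
combine $\frac{1}{n}+\frac{1}{N-n}=\frac{N}{n(N-n)}$, and factor out $\frac{N}{n(N-n)}$ from both terms by noting that $\frac{1}{N-n}=\frac{N}{n(N-n)}\cdot\frac{n}{N}$. This yields
\[
\frac{N}{n(N-n)}\Bigl(S_1-\frac{n}{N}S\Bigr),
\]
which is precisely the right-hand side.

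There is no real obstacle here; the only substantive ingredient is the permutation identity $\sum_{i=1}^N f(\pi_i)=\sum_{i=1}^N f(i)$, and the rest is the common-denominator calculation above. I would simply present the chain of equalities in two or three lines.
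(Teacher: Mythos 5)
Your proposal is correct and follows essentially the same route as the paper: both rest on the permutation identity $\sum_{i=1}^N f(\pi_i)=\sum_{i=1}^N f(i)$ to replace the second partial sum by $\sum_{i=1}^N f(i)-\sum_{i=1}^n f(\pi_i)$, after which the identity is pure common-denominator algebra. Nothing is missing.
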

\begin{proof}
See that
\[
\frac{1}{N-n}\sum_{i=n+1}^{N}  f({\pi_i}) = \frac{1}{N-n}\left[\sum_{i=1}^{N}  f({\pi_i}) - \sum_{i=1}^n  f({\pi_i})\right] =  \frac{1}{N-n}\left[\sum_{i=1}^{N}  f({i}) - \sum_{i=1}^n  f({\pi_i})\right].
\]
The claimed expression is obtained by direct calculations. 
\end{proof}

\begin{proof}[Proof of Lemma~\ref{lem1}.]
Let $\mathcal{G}_N = \{\bb{g}_j\}_{j=1}^N$. Denote as 
\[
\phi^{\bb g}(\bb t) = \frac{1}{N}\sum_{j=1}^N \exp(\ti \bb{t}^\top \bb{g}_j). 
\]
It follows from Lemma~\ref{lem2} that
\[
\widehat{\phi}_n(\bb t) - \widehat{\phi}_m^{(0)}(\bb t) = \frac{N}{m}\big[\widehat{\phi}_n(\bb t)-\phi^{\bb g}(\bb t)\big].
\]
Hence, for a fixed $\mathcal{G}_N$ and $w$, the only source of randomness 
in $D_{n,m}$ comes from $\bb{\mathcal{R}}_n$.
Under  $\mathcal{H}_0$, the pooled sample $\bm{\mathcal{Z}}_N = {\boldsymbol {\cal{X}}}_{n} \cup {\boldsymbol {\cal{X}}}^{(0)}_{m}$  is a random sample from $\mathcal{P}_{\tX}$ and therefore, the vector of all ranks $\Big(\widehat{\tG}(\tX_1),\dots,\widehat{\tG}(\tX_n),\widehat{\tG}(\tX_1^{(0)}),\dots, \widehat{\tG}(\tX_m^{(0)})\Big)^\top$ has a uniform distribution over the $N!$ permutations of the grid set $\mathcal{G}_N$, see \citet[Proposition 2.2]{deb2023} or \citet[Proposition 2.5]{hallin2021}. Therefore, $\bb{\mathcal{R}}_n$ is uniformly distributed on the set of all subsets of $\mathcal{G}_N$ of size $n$. This implies that 
\[
\mathsf{P}\left(\widehat{\phi}_n(\bb t) = \frac{1}{n}\sum_{j=1}^n \exp({\ti \bb{t}^\top \bb{g}_{\pi_j}})\right) = \frac{1}{{N \choose n}}
\]
for any permutation $(\pi_1,\dots,\pi_N)$ of $(1,\dots,N)$. Since $D_{n,m}$ is only a transformation of $\widehat{\phi}_n(\bb t)$, it follows that its distribution is the same for all $\mathcal{P}_{\tX}$.
\end{proof}


Recall that  $\nu$ is a specified absolutely continuous reference measure on $\R^p$ with a compact support $\mathcal{S} \subset \R^p$, 
  $N=n+m$, and  \eqref{weight} holds.

\begin{lemma}\label{lem3}
Let $\pi=(\pi_1,\dots,\pi_N)$ be a random permutation of $(1,\dots,N)$ and let  $\{\mathcal{G}_N\}$, $\mathcal{G}_N = \{\bb g_i\}_{i=1}^N$, be a sequence of grids such that the uniform measure on $\mathcal{G}_N$ converges weakly to $\nu$ as $N\to\infty$. Let $F\subset \R^p$ be a compact. 
Set
\[
S_N(\bb t) =  \sum_{i=1}^N d_{\pi_i} \bigl[\cos(\bb{t}^\top\bb{g}_{i})+\sin(\bb{t}^\top\bb{g}_{i})\bigr],
\]
where
\[
d_j = \begin{cases} \sqrt{\frac{N-n}{n N}},& i\leq n,\\
-\sqrt{\frac{n}{(N-n)N}},& i>n.
\end{cases}
\]
If $n/(N-n) \to \lambda\in(0,\infty)$ as $N\to \infty$, then
\begin{equation}\label{eq:F}
\int_F S_{N}^2(\bb t) w(\bb t) \mathrm{d} \bb t \stackrel{D}{\to} \int_F Z(\bb t)^2  w(\bb t) \mathrm{d} \bb t, 
\end{equation}
where $Z$  is a centered Gaussian process with covariance function in \eqref{eq:R}. 
\end{lemma}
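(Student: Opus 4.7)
My plan is to prove \eqref{eq:F} in three steps: first, establish finite-dimensional convergence of the process $S_N(\cdot)$ to $Z(\cdot)$ via a combinatorial central limit theorem; second, recast the integrated quantity as a squared Hilbert-space norm and use a Karhunen--Lo\`eve truncation to control the infinite-dimensional tail uniformly in $N$; and third, conclude by the continuous mapping theorem.

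For the finite-dimensional step, I would apply the Cram\'er--Wold device: fix $\bb t_1,\dots,\bb t_K\in F$ and consider an arbitrary linear combination
\[
L_N=\sum_{k=1}^K\alpha_k S_N(\bb t_k)=\sum_{i=1}^N d_{\pi_i}\,\tilde c_i,\qquad \tilde c_i=\sum_{k=1}^K\alpha_k\bigl[\cos(\bb t_k^\top\bb g_i)+\sin(\bb t_k^\top\bb g_i)\bigr].
\]
The weights $\{d_j\}$ satisfy $\sum_j d_j=0$, $\sum_j d_j^2=1$, and $n/(N-n)\to\lambda\in(0,1)$ yields $\max_j|d_j|=O(N^{-1/2})$; meanwhile $\{\tilde c_i\}$ is uniformly bounded because $\mathcal{S}$ and $F$ are compact. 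These are precisely the hypotheses of H\'ajek's combinatorial CLT, which gives $L_N\stackrel{\mathcal D}{\to}\mathsf{N}(0,\sigma^2)$ with $\sigma^2=\lim_{N\to\infty}\tfrac{1}{N-1}\sum_i(\tilde c_i-\bar{\tilde c}_N)^2$. Using the weak convergence of the empirical measure on $\mathcal G_N$ to $\nu$ together with the boundedness and continuity of the integrand, this limit is identified as $\sum_{k,l}\alpha_k\alpha_l R(\bb t_k,\bb t_l)$, matching the finite-dimensional covariance of $Z$ given in \eqref{eq:R}.

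To upgrade to the integrated functional, I would view $S_N$ and $Z$ in the Hilbert space $H=L^2(F,w)$ so that $\int_F S_N^2 w\,\mathrm d\bb t=\|S_N\|_H^2$. Since $R$ is continuous on the compact set $F\times F$ and positive semidefinite, Mercer's theorem supplies an orthonormal eigenbasis $\{\phi_k\}$ of $H$ for the integral operator with kernel $R$, with eigenvalues $\lambda_1\ge\lambda_2\ge\cdots\ge0$ summing to $\int_F R(\bb t,\bb t)w(\bb t)\,\mathrm d\bb t$, and the Karhunen--Lo\`eve representation $Z=\sum_k\sqrt{\lambda_k}\,U_k\phi_k$ in $H$ with independent standard normal $U_k$, whence $\int_F Z^2 w\,\mathrm d\bb t=\sum_k\lambda_k U_k^2$. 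The coefficients $\langle S_N,\phi_k\rangle_H=\sum_i d_{\pi_i}h_i^{(k)}$, where $h_i^{(k)}=\int_F[\cos(\bb t^\top\bb g_i)+\sin(\bb t^\top\bb g_i)]\phi_k(\bb t)w(\bb t)\,\mathrm d\bb t$, are again permutation sums, so the previous step yields $(\langle S_N,\phi_k\rangle_H)_{k=1}^K\stackrel{\mathcal D}{\to}(\sqrt{\lambda_k}U_k)_{k=1}^K$ for every fixed $K$. The tail is controlled via the exact variance $\E\sum_{k>K}|\langle S_N,\phi_k\rangle_H|^2=\tfrac{1}{N-1}\sum_{k>K}\sum_i(h_i^{(k)}-\bar h^{(k)})^2$, which tends to $\sum_{k>K}\lambda_k$ and can be made arbitrarily small by taking $K$ large.

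Assembling the pieces: for any $\varepsilon>0$ choose $K$ with $\sum_{k>K}\lambda_k<\varepsilon$; Markov's inequality then makes $\sum_{k>K}|\langle S_N,\phi_k\rangle_H|^2$ negligible in probability uniformly in $N$, while the truncation $\sum_{k\le K}|\langle S_N,\phi_k\rangle_H|^2$ converges in distribution to $\sum_{k\le K}\lambda_k U_k^2$ by the continuous mapping theorem applied to the finite-dimensional CLT. Letting $K\to\infty$ closes the argument. The main obstacle I anticipate is the uniform verification of H\'ajek's Lindeberg-type condition over the family of linear functionals entering the Karhunen--Lo\`eve expansion; however, compactness of $\mathcal S$ and $F$ bounds the $h_i^{(k)}$ uniformly in $i$ (with a bound depending on $k$), and combined with $\max_j|d_j|=O(N^{-1/2})$ the H\'ajek condition reduces to the elementary estimate $O(N^{-1})\cdot O(N)=o(1)$.
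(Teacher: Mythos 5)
Your proposal is correct in substance, and its first half coincides with the paper's argument: the paper also proves convergence of the finite--dimensional distributions of $S_N$ by applying a combinatorial central limit theorem (Hoeffding's, in the paper) to linear combinations $\sum_k \lambda_k S_N(\bb t_k)=\sum_i d_{\pi_i}b(i)$ with bounded scores, using weak convergence of the empirical measure on $\mathcal{G}_N$ to $\nu$ to identify the limiting covariance \eqref{eq:R}. Where you genuinely diverge is the passage from fidi convergence to convergence of $\int_F S_N^2\,w$: the paper invokes a ready-made criterion (Ibragimov's Theorem~22), which requires only the uniform second-moment bound $\sup_N \E\int_F S_N^2 w<\infty$ and the H\"older-in-mean estimate $\E|S_N^2(\bb t_1)-S_N^2(\bb t_2)|\le C\|\bb t_1-\bb t_2\|^{\kappa}$, verified via the Lipschitz bound $|a_{\bb t_1}(i)-a_{\bb t_2}(i)|\le 2\|\bb t_1-\bb t_2\|\,\|\bb g_i\|$ and $\tfrac1N\sum_i\|\bb g_i\|^2\to\int\|\bb x\|^2\,\mathrm{d}\nu<\infty$ (here compact support of $\nu$ is used). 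You instead work in $H=L^2(F,w)$, expand along the Mercer eigenbasis of $R$, apply the same combinatorial CLT to the projections $\langle S_N,\phi_k\rangle=\sum_i d_{\pi_i}h_i^{(k)}$, and close with a truncation/approximation argument; this buys a self-contained route that also exhibits the limit directly as $\sum_k\lambda_k U_k^2$, at the price of Mercer's theorem and one interchange of limits you state but do not justify: the claim $\E\sum_{k>K}|\langle S_N,\phi_k\rangle_H|^2\to\sum_{k>K}\lambda_k$ is a sum over infinitely many $k$ of term-wise limits, so you should derive it by subtraction, noting that $\E\|S_N\|_H^2=\int_F \var S_N(\bb t)\,w(\bb t)\,\mathrm{d}\bb t\to\sum_k\lambda_k$ by bounded convergence (the integrand is uniformly bounded on $F$) and that the head $\E\sum_{k\le K}|\langle S_N,\phi_k\rangle_H|^2\to\sum_{k\le K}\lambda_k$ for each fixed $K$; with that one-line fix, and the standard remark that the degenerate case $\var W(\bb t_k)$-combination $=0$ is handled trivially in the CLT step (a gloss the paper shares), your argument is complete.
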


\begin{proof}
 Notice that it follows from the assumptions that $n\to\infty$ if and only if $N\to\infty$. Hence, all convergences below hold for $n\to\infty$ as well as $N\to \infty$. 
Set  $a_{\bb t}(i) = \cos(\bb t^\top \bb g_i)+\sin(\bb t^\top \bb g_i)$, then 
$
 S_N(\bb t)= \sum_{i=1}^N d_{\pi_i} a_{\bb t}(i).
$
Recall that $m=N-n$. See that
\begin{align*}
\overline{d}_N &:= \frac{1}{N}\sum_{i=1}^N d_i = \frac{1}{N} \sqrt{\frac{N}{nm}} \left[n \frac{m}{N} - m \frac{n}{N}\right]=0,\\
\sigma^2_d &:= \frac{1}{N}\sum_{i=1}^N (d_i - \overline{d}_N)^2  = \frac{1}{N}\frac{N}{nm}\left[ n \left(\frac{m}{N}\right)^2 + m \left(\frac{n}{N}\right)^2\right] = \frac{1}{N},\\
\max_{1\leq i\leq N} (d_i - \overline{d}_N)^2 &= \frac{1}{N}\max\left\{\frac{n}{m},\frac{m}{n}\right\}.
\end{align*}
Let $\tY$ be a random vector with distribution $\nu$ and define $W(\bb t) = \cos(\bb t^\top \tY)+\sin(\bb t^\top \tY)$. 
Since the uniform measure on $\mathcal{G}_N$ converges weakly to $\nu$, we have
\[
\frac{1}{N} \sum_{i=1}^N h(\bb g_i) \to \int_{\R^p} h(\tx) \mathrm{d}\nu(\tx) = \E h(\tY)
\]
for any continuous bounded function $h$. Therefore, 
\begin{align*}
\overline{a}_{\bb t,N} &:= \frac{1}{N}\sum_{i=1}^N a_{\bb t}(i) \to \E W(\bb t), \\
 \sigma^2_{a,\bb t} &:=\frac{1}{N}\sum_{i=1}^N[a_{\bb t}(i) - \overline{a}_{\bb t,N}]^2 \to \var [ W(\bb t)], 
\end{align*}
 and
\[
\max_{1\leq i\leq N}[a_{\bb t}(i) - \overline{a}_{\bb t,N}]^2 \leq \max_{1\leq i\leq N} 2[a_{\bb t}(i)^2 + \overline{a}_{\bb t,N}^2]\leq 2(4+4)=16.
\]
Therefore,
$
\E S_N(\bb{t}) = N \overline{a}_{\bb t,N} \overline{d}_N = 0
$
and
\begin{equation}\label{eq:varS}
\var S_N(\bb t) = \frac{N^2}{N-1} \sigma^2_{a,\bb t} \sigma^2_d  \to \var \, W(\bb t).
\end{equation}
Since
\begin{align*}
N &\frac{\max_{1\leq i\leq N}[a_{\bb t}(i) - \overline{a}_{\bb t,N}]^2}{\sum_{i=1}^N [a_{\bb t}(i) - \overline{a}_{\bb t, N}]^2}  \frac{\max_{1\leq i\leq N}[d_i - \overline{d}_N]^2}{\sum_{i=1}^N [d_i - \overline{d}_N]^2}\\
& \quad  \quad  \leq  \frac{16}{ \frac{1}{N}\sum_{i=1}^N[a_{\bb t}(i) - \overline{a}_{\bb t,N}]^2} \frac{1}{N} \max\left\{\frac{n}{m},\frac{m}{n}\right\} \to 0,
\end{align*}
it follows from Hoeffding's combinatorial central limit theorem, \cite[Theorem~4]{hoef}, that $S_N(\bb t) \stackrel{D}{\to} \mathsf{N}\big(0, \var\,  W(\bb t) \big)$.

Let $K>1$, $\bb t_1,\dots,\bb t_K$ be from $\R^p$ and let $\lambda_1,\dots,\lambda_K \in \R$. Consider
\[
\sum_{j=1}^K \lambda_j S_N(\bb t_j)  = \sum_{i=1}^N d_{\pi_i} \underbrace{\sum_{j=1}^K \lambda_j a_{\bb t_j}(i)}
_{b(i)} =  \sum_{i=1}^N d_{\pi_i}b(i).
\]
Then
\[
\overline{b}_N := \frac{1}{N} \sum_{i=1}^N b (i ) = \sum_{j=1}^K \lambda_j \overline{a}_{\bb t_j, N}
\]
and
\begin{align*}
\sigma^2_b &:= \frac{1}{N}\sum_{i=1}^N [b(i)-\overline{b}_N]^2 = \frac{1}{N}\sum_{i=1}^N \left[\sum_{j=1}^K \lambda_j [a_{\bb t_j}(i) - \overline{a}_{\bb t_j,N}]\right]^2 \\
&=\sum_{j=1}^K \sum_{l=1}^K \lambda_j \lambda_l \frac{1}{N} \sum_{i=1}^N  [a_{\bb t_j}(i) - \overline{a}_{\bb t_j,N}][a_{\bb t_l}(i) - \overline{a}_{\bb t_l,N}]\\
&\to \sum_{j=1}^K \sum_{l=1}^K\lambda_j \lambda_l  \mathsf{cov}\big(W(\bb t_j),W(\bb t_l)\big) =  \sum_{j=1}^K \sum_{l=1}^K\lambda_j \lambda_l  R(\bb t_j,\bb t_l). 
\end{align*}
It follows from the Cauchy-Schwarz inequality that 
\begin{align*}
 [b(i) -\overline{b}_N]^2 \leq  16 K \sum_{j=1}^K \lambda_j^2,
\end{align*}
and therefore,
\[
N \frac{\max_{1\leq i\leq N}[b(i) - \overline{b}_{N}]^2}{\sum_{i=1}^N [b(i) - \overline{b}_{N}]^2} \frac{\max_{1\leq i\leq N}[d_i - \overline{d}_N]^2}{\sum_{i=1}^N [d_i - \overline{d}_N]^2}\to 0,
\]
 and the Hoeffding's combinatorial central limit theorem implies that
 \[
 \sum_{j=1}^K \lambda_j S_N(\bb t_j)  \stackrel{D}{\to} \mathsf{N}\left(0,  \sum_{j=1}^K \sum_{l=1}^K\lambda_j \lambda_l  R(\bb t_j,\bb t_l)\right).
 \]
 This proves convergence of finite dimensional distributions of $S_{N}$ to finite dimensional distributions of $Z$.  

To prove \eqref{eq:F}, it remains to  show that
\begin{equation}\label{eq:C1}
\sup_N \E \int_F S_N(\bb t)^2 w(\bb t) \mathrm{d} \bb t <\infty
\end{equation}
and there exist $C>0$ and $\kappa>0$ such that
\begin{equation}\label{eq:C2}
\sup_N \E |S_N^2(\bb t_1) - S_N^2(\bb t_2)|\leq C \| \bb t_1-\bb t_2\|^\kappa,
\end{equation}
see \citet[Theorem 22]{ibragimov}.  It follows from \eqref{eq:varS} that
\[
\E S_N(\bb t)^2 = \var S_N(\bb t) = \frac{N}{N-1} \sigma^2_{a,\bb t} \leq M_1
\]
for some real constant $M_1>0$. Hence, \eqref{eq:C1} follows from integrability of $w$. Furthermore, the Cauchy-Schwarz inequality yields
\begin{align*}
\E |S_N^2(\bb t_1) - S_N^2(\bb t_2)&|\leq \sqrt{\E [S_N(\bb t_1) - S_N(\bb t_2)]^2} \sqrt{\E [S_N(\bb t_1) + S_N(\bb t_2)]^2} \\
&= \sqrt{\var Q_N(\bb t_1,\bb t_2)} \sqrt{\var \widetilde{Q}_N(\bb t_1,\bb t_2)},
\end{align*}
where
\begin{align*}
Q_N(\bb t_1,\bb t_2) &= \sum_{i=1}^N d_{\pi_i} [a_{\bb t_1}(i) -a_{\bb t_2}(i)],\\
\widetilde{Q}_N(\bb t_1,\bb t_2) &= \sum_{i=1}^N d_{\pi_i}[a_{\bb t_1}(i) +a_{\bb t_2}(i)]. 
\end{align*}
See that
\begin{align*}
|a_{\bb t_1}(i) -a_{\bb t_2}(i)| &\leq |\cos(\bb t_1^\top \bb g_i) -  \cos(\bb t_2^\top \bb g_i) | + |\sin(\bb t_1^\top \bb g_i) -  \sin(\bb t_2^\top \bb g_i) | \\
&\leq 2 \|\bb t_1 - \bb t_2\| \|\bb g_i\|.
\end{align*}
Similar computations as for $S_N$  give that
\begin{align*}
\var Q_N(\bb t_1,\bb t_2) &= \frac{1}{N-1}\sum_{i=1}^N[a_{\bb t_1}(i) -a_{\bb t_2}(i)]^2 - \frac{N}{N-1}[\overline{a}_{\bb t_1,N} - \overline{a}_{\bb t_2,N}]^2 \\
& \leq   \frac{N}{N-1} 4  \|\bb t_1 - \bb t_2\|^2 \frac{1}{N} \sum_{i=1}^N \| \bb g_i\|^2 < M_2  \|\bb t_1 - \bb t_2\|^2,
\end{align*}
where $M_2>0$. This follows from the fact that $ \frac{1}{N} \sum_{i=1}^N \| \bb g_i\|^2 \to \int_{\R^p} \|\tx\|^2 \mathrm{d} \nu(\tx) <\infty$.  Similarly, 
it follows that
$
\var \widetilde{Q}_N(\bb t_1,\bb t_2) \leq M_3
$
 for some constant $M_3>0$. This implies that \eqref{eq:C2} holds. 
\end{proof}

\begin{proof}[Proof of Theorem~\ref{th2}]
It follows from \eqref{weight} that for any $\tx,\bb{y}\in\R^p$, 
\[
\int_{\R^p} \cos(\bb{t}^\top\tx)\sin(\bb{t}^\top \bb{y}) w(\bb{t})\mathrm{d} \bb{t} = 0.
\]
Therefore, 
\[
D_{n,m} = \int_{\R^p} Z_{n,m}^2(\bb{t}) w(\bb{t})\mathrm{d}\bb{t}
\]
with
\begin{align*}
Z_{n,m} (\bb{t})&= \sqrt{\frac{nm}{n+m}} \left[ \mathrm{Re}(\widehat{\phi}_n)(\bb{t}) +\mathrm{Im}(\widehat{\phi}_n)(\bb{t})  - \mathrm{Re}(\widehat{\phi}_m^{(0)})(\bb{t}) -\mathrm{Im}(\widehat{\phi}_m^{(0)})(\bb{t})   \right]\\
& =  \sqrt{\frac{nm}{n+m}}\Bigg\{ \frac{1}{n}\sum_{i=1}^n \Big[\cos\big(\bb{t}^\top \widehat{\bb G}(\tX_i)\big) +\sin\big(\bb{t}^\top \widehat{\bb G}(\tX_i)\big) \Big]\\
&\quad \quad\quad\quad \quad\quad- \frac{1}{m}\sum_{i=1}^m \Big[\cos\big(\bb{t}^\top\widehat{\bb G}(\tX_i^{(0)})\big) 
+\sin\big(\bb{t}^\top\widehat{\bb G}(\tX_i^{(0)})\big)\Big] \Bigg\}.
\end{align*}
Under the null hypothesis, the vector of ranks $\Big(\widehat{\tG}(\tX_1),\dots,\widehat{\tG}(\tX_n),\widehat{\tG}(\tX_1^{(0)}),\dots, \widehat{\tG}(\tX_m^{(0)})\Big)^\top$ has a uniform distribution over the $N!$ permutations of the grid set $\mathcal{G}_N$, so the distribution of $Z_{n,m}(\bb{t})$ is the same as the distribution of
\[
\sqrt{\frac{nm}{n+m}}\Bigg\{ \frac{1}{n}\sum_{i=1}^n \bigl[\cos(\bb{t}^\top\bb{g}_{\pi_i}) +\sin(\bb{t}^\top \bb{g}_{\pi_i}) \bigr]- \frac{1}{m}\sum_{i=n+1}^{N} \bigl[\cos(\bb{t}^\top\bb{g}_{\pi_i}) +\sin(\bb{t}^\top \bb{g}_{\pi_i})\bigr] \Bigg\}. 
\]
It follows from Lemma~\ref{lem2} that the distribution of $Z_{n,m}(\bb{t})$ is the same as the distribution of
\[
 \sqrt{\frac{n+m}{nm}} \sum_{i=1}^N c_i \bigl[\cos(\bb{t}^\top\bb{g}_{\pi_i})+\sin(\bb{t}^\top\bb{g}_{\pi_i})\bigr]
\]
for $c_i  =m/(n+m)$ for $1\leq i\leq n$, and $c_i = -n/(n+m)$ for $n+1\leq i\leq N$.
Therefore, the distribution of  $Z_{n,m}(\bb{t})$ is the same as the distribution of 
\begin{equation}\label{eq:Z0}
Z_{n,m}^{(0)}(\bb{t}) =  \sqrt{\frac{n+m}{nm}} \sum_{i=1}^N c_{\pi_i} \bigl[\cos(\bb{t}^\top\bb{g}_{i})+\sin(\bb{t}^\top\bb{g}_{i})\bigr] = \sum_{i=1}^N d_{\pi_i} \bigl[\cos(\bb{t}^\top\bb{g}_{i})+\sin(\bb{t}^\top\bb{g}_{i})\bigr] .
\end{equation}
It follows from Lemma~\ref{lem2} that 
\[
\int_F [Z_{n,m}^{(0)}(\bb t)]^2 w(\bb t) \mathrm{d} \bb t \stackrel{D}{\to} \int_F Z^2(\bb t) w(\bb t) \mathrm{d} \bb t 
\]
for a centered Gaussian process $Z$ with the covariance function in \eqref{eq:R} and for any compact set $F\subset \R^p$. 
It also follows from the proof of Lemma~\ref{lem3} that  $\E  [Z_{n,m}^{(0)}(\bb t)]^2\leq M_1$ for a constant $M_1>0$. Since $w$ is integrable on $\R^p$, there exists a compact set $F_{\eps}$ such that
$
\E \int_{\R^p\setminus F_{\eps}} [Z_{n,m}^{(0)}(\bb t)]^2 w(\bb t) \mathrm{d} \bb t <\eps.
$
Analogous arguments apply also to process $Z$, so 
$
\E \int_{\R^p\setminus F_{\eps}} [Z(\bb t)]^2 w(\bb t) \mathrm{d} \bb t <\eps.$
This finishes the proof. 
\end{proof}

\begin{lemma}\label{lem:triangle}
Let $w$ satisfy \eqref{weight} and $K= \int_{\R^p} \|\bb t\| w(\bb t) \mathrm{d} \bb t <\infty$. Then
\[
|C_w(\bb x_1 -\bb y_1) - C_w(\bb x_2 -\bb y_2)| \leq K \left(\|\bb x_1 - \bb x_2\| + \|\bb y_1 -\bb y_2\|\right).
\] 
\end{lemma}
\begin{proof} See that
\[
\left| \cos(a) - \cos(b) \right| = \left|2 \sin\left(\frac{a+b}{2}\right)\sin\left(\frac{a-b}{2}\right)\right| \leq 2 \left|  \sin\left(\frac{a-b}{2}\right)\right| \leq |a-b|.
\]
We get from \eqref{Cw} and from the Cauchy-Schwarz inequality that
\begin{align*}
|C_w(\bb x) - C_w(\bb y)| &\leq \int_{\R^p} |\cos(\bb t^\top \bb x) - \cos(\bb t^\top \bb y)| w(\bb t) \mathrm{d} \bb t \\
& \leq \int_{\R^p} |\bb t^\top (\bb x -\bb y)| w(\bb t) \mathrm{d} \bb t \\
& \leq  \|\bb x -\bb y \|  \int_{\R^p}\|\bb t\| w(\bb t)  \mathrm{d} \bb t  = K   \|\bb x -\bb y \|.
\end{align*}
The assertion then follows from the standard triangular inequality.
\end{proof}

\begin{lemma}\label{lem:as}
Consider the same assumptions as in Theorem~\ref{th3}. 
 Let $\widehat{\bb G}_N$ be the optimal transport of the pooled sample  to $\mathcal{G}_N$.
Then
\[
\frac{1}{n} \sum_{i=1}^n \big\| \widehat{\bb G}_N(\bb X_i) - \bb G(\bb X_i) \big \| \to 0, \quad  \frac{1}{m}\sum_{j=1}^m \big\| \widehat{\bb G}_N(\bb X_i^{(0)}) - \bb G(\bb X_i^{(0)}) \big \| \to 0
\]
with probability 1 as $n \to \infty$. 
\end{lemma}
\begin{proof}
Let $\mu_N$ be the empirical measure on the pooled sample $\{\bb X_1,\dots,\bb X_n, \bb X_1^{(0)},\dots, \bb X_m^{(0)}\}$. Then $\mu_N$ converges weakly to $\mu$ as $n\to \infty$. Furthermore, the assumption of finiteness of the second moments of both $\mathcal{P}_{\bb X}$ and $\mathcal{P}_0$ and the compactness of $S$ imply that $\mu_N$ converges to $\mu$ also in the Wasserstein space $W_2$ \cite[Chapter 5]{santam}. Moreover,  as the mixture $\mu$ is absolutely continuous,  Brenier's theorem \cite[Theorem 2.32]{villani} ensures the existence and uniqueness of the optimal transport $\bb G$. It then follows from the stability of optimal transport plans, \cite[Chapter 1]{santam}, that $\widehat{\bb G}_N$ converges to $\bb G$ in a sense that
\[
\frac{1}{N}\left[ \sum_{i=1}^n \| \widehat{\bb G}_N(\bb X_i) - {\bb G}(\bb X_i) \|^2 +  \sum_{j=1}^m \|\widehat{\bb G}_N(\bb X_i^{(0)}) - {\bb G}(\bb X_i^{(0)}) \|^2 \right] \to 0
\]
with probability 1 as $n\to\infty$.
The desired assertion then follows from the Cauchy-Schwarz inequality. 
\end{proof}

\begin{proof}[Proof of Theorem~\ref{th3}.]
%
Let $\bb G$ be as in Lemma~\ref{lem:as}. It follows from \eqref{sum} that
\[
\frac{D_{n,m}}{n+m} =  \underbrace{\frac{m}{n (n+m)^2} S_1}_{J_1} + \underbrace{\frac{n}{m (n+m)^2} S_2}_{J_2}- 2\cdot \underbrace{\frac{1}{(n+m)^2} \cdot S_3}_{J_3},
\]
where
\begin{align*}
S_1
&= \sum_{j=1}^n\sum_{k=1}^n C_w(\boldsymbol R_{j}-\boldsymbol R_{k}),\\
S_2 &=   \sum_{j=1}^{m}\sum_{k=1}^m C_w(\boldsymbol R^{(0)}_{j}-\boldsymbol R^{(0)}_{k}),  \\ 
S_3 &= \sum_{j=1}^n \sum_{k=1}^{m} C_w(\boldsymbol R_{j}-\boldsymbol R^{(0)}_{k}).
\end{align*}
It follows from Lemma~\ref{lem:triangle} that 
\[
|C_w(\bb R_j - \bb R_k ) - C_w(\bb G(\bb X_i) - \bb G(\bb X_k))| \leq K\left(\| \widehat{\bb G}_N(\bb X_j) - \bb G(\bb X_j) \| + \| \widehat{\bb G}_N(\bb X_k) - \bb G(\bb X_k) \| \right),
\] 
and consequently
\[
S_1 \geq \sum_{j=1}^n\sum_{k=1}^n C_w(\bb G(\bb X_j) - \bb G(\bb X_k)) - 2K\cdot n\sum_{j=1}^n \| \widehat{\bb G}_N(\bb X_j) - \bb G(\bb X_j) \|.   
\]
Hence,
\begin{align*}
\liminf_{n\to \infty} J_1 &\geq \liminf_{n \to \infty} \frac{m}{n (n+m)^2} \sum_{j=1}^n\sum_{k=1}^n C_w(\bb G(\bb X_j) - \bb G(\bb X_k))  \\
& \quad -2K \limsup_{n\to\infty}  \frac{m n}{n (n+m)^2} \sum_{j=1}^n \| \widehat{\bb G}_N(\bb X_j) - \bb G(\bb X_j) \| \\
& = \frac{\lambda}{(1+\lambda)^2} \E C_w(\bb G(\bb X_j) - \bb G(\bb X_k)), 
\end{align*}
with probability 1, due to Lemma~\ref{lem:as}. Similarly, 
\[
S_1 \leq \sum_{j=1}^n\sum_{k=1}^n C_w(\bb G(\bb X_j) - \bb G(\bb X_k)) + 2K\cdot n\sum_{j=1}^n \| \widehat{\bb G}_N(\bb X_j) - \bb G(\bb X_j) \|
\]
and the same arguments lead to the conclusion that $\limsup_{n\to\infty} J_1 \leq \frac{\lambda}{(1+\lambda)^2} \E C_w(\bb G(\bb X_i) - \bb G(\bb X_k))$ with probability 1, which together with the previous implies that 
\[
J_1 \stackrel{\mathsf{P}}{\to} \frac{\lambda}{(1+\lambda)^2} \E C_w(\bb G(\bb X_i) - \bb G(\bb X_k))
\]
as $n \to \infty$. 
Analogously, one can show that as $n\to \infty$,
\[
 J_2 \stackrel{\mathsf{P}}{\to}  \frac{\lambda}{(1+\lambda)^2} \E C_w(\bb G(\bb X_j^{0)}) - \bb G(\bb X_k^{(0)}))
\]
 and
\[
J_3 \stackrel{\mathsf{P}}{\to} \frac{\lambda}{(1+\lambda)^2} \E C_w(\bb G(\bb X_j^{0)}) - \bb G(\bb X_k)).
\]
Let $\bb U_i = \bb G(\bb X_i)$, $\bb V_i = \bb G(\bb X_i^{(0)})$, $i=1,2$. Then we get from the previous that as $n\to\infty$,
\begin{align}
\frac{D_{n,m}}{n+m} &\stackrel{\mathsf{P}}{\to} \frac{\lambda}{(1+\lambda)^2} 
\left[\E C_w(\bb U_1- \bb U_2)+  \E C_w(\bb V_1 - \bb V_2) - 2  \E C_w(\bb U_1 - \bb V_2)\right] \notag \\
& =  \frac{\lambda}{(1+\lambda)^2}  \int_{\R^p}\left| \exp(\ii\bb t^\top \bb U_1) - \exp(\ii \bb t^\top \bb V_1)\right|^2 w(\bb t) \mathrm{d} \bb t,\notag \\
 & =  \frac{\lambda}{(1+\lambda)^2}  \int_{\R^p}\left|\varphi_{\bb U}(\bb t) - \varphi_{\bb V}(\bb t)\right|^2 w(\bb t) \mathrm{d} \bb t,\label{eq:difUV}
\end{align}
where $\varphi_{\bb U}$ and $\varphi_{\bb V}$ are the characteristic functions of $\bb U_1$ and $\bb V_1$, respectively.

Under the stated assumptions, both $\mu$ and $\nu$ are absolutely continuous with finite second moments. Hence, if $\bb{H}$ is the optimal transport that pushes $\nu$ to $\mu$, then $\bb H(\bb G(\tx)) = \tx$ $\mu$-a.s., see \cite[Corollary 6.6.]{galichon} and 
since $\mu$ is the mixture of $\mathcal{P}_{\bb X}$ and $\mathcal{P}_0$, the equality holds also $\mathcal{P}_{\bb X}$ a.s. and  $\mathcal{P}_{0}$ a.s. If $\bb U_1 = \bb G(\bb X_1)$ has the same distribution as $\bb V_1 = \bb G(\bb X_1^{(0)})$, then $\bb X_1 = \bb H(\bb G(\bb X_1)) = \bb H(\bb U_1))$ needs to have the same distribution as $\bb X_1^{(0)} = \bb H(\bb G(\bb X_1^{(0)})) = \bb H(\bb V_1)$. Consequently, if $\mathcal{P}_{\bb X} \ne \mathcal{P}_0$, then $\varphi_{\bb U} \ne \varphi_{\bb V}$ and the integral in \eqref{eq:difUV} is positive for all weight functions $w$ positive on a neighborhood of $\bb 0$.

\end{proof}

\section{Additional results to the practical part} \label{app:d}

Algorithm~\ref{alg0} describes a procedure for calculation of the asymptotic critical values from Theorem~\ref{th2}. Table \ref{tab:crit:est} contains bootstrap  critical values $\widetilde{c}_{n,m,\alpha}$ calculated for samples generated from $\mathsf{N}_2(\bm{0},\bm{I})$ and weighting by $C_{\alpha,\gamma}$ from \eqref{eq:Cw} with $\gamma=2$ and various values of $a>0$.

\begin{algorithm}[bhtp]
\caption{Approximate asymptotic critical values $c_{\alpha}$.}
\begin{algorithmic}[1]
\Require Parameter  $K>0$ and large integers $M,G,B\in\mathbb{N}$.  
\State  Choose a partition $\{\mathcal{V}_i\}_{i=1}^G$ of the set $[-K,K]^p$ and 
a corresponding grid  $\{\bb t_i\}_{i=1}^G$ such that $\bb t_i \in \mathcal{V}_i$.
\State  Choose a grid of $M$ points 
in the support of $\nu$ such that the corresponding empirical  measure $\nu_M$  
reasonably approximates $\nu$.
\State  Approximate the covariances $R(\bb t_i, \bb t_j)$, $i,j=1,\dots,G$, in \eqref{eq:R} 
by the covariances computed with $\tY$ replaced by $\tY_M$
with the discrete  distribution $\nu_M$ (so the covariances can be computed as finite sums). 
\For{$b=1$ in $B$}
\State Generate a realization of the process $Z$ in points $\bb t_i$, $i=1,\dots,G$, as a centered multivariate normal random vector $\big(Z^b(\bb t_1),\dots, Z^b(\bb t_G)\big)^\top$ with covariances obtained in step 3.
\State Calculate the limit variable from \eqref{limit} by a numerical integration using the integration grid from step 1, that is
$
D^b = \sum_{i=1}^G [Z^b(\bb t_i)]^2 w(\bb t_i) \mathrm{vol}(\mathcal{V}_i),
$
where $\mathrm{vol}(\cdot)$ stands for the volume of a set. 
\EndFor
\State  Calculate the critical value $c_{\alpha}$ as the corresponding $(1-\alpha)$-sample quantile of $D^1,\dots,D^B$. 
\Ensure An approximation to the asymptotic critical value $c_{\alpha}$.
\end{algorithmic}
\label{alg0}
\end{algorithm}

\begin{table}[htbp]
\scriptsize
\caption{Bootstrap critical values $\widetilde{c}_{n,m,\alpha}$ for $p=2$ 
computed for samples generated from $\mathsf{N}_2(\bm{0},\bm{I})$. 
The reference sample is generated either randomly (R) or computed as a set of transformed grid points from \eqref{eq:xtilde} (G).  Weighting by $C_{\alpha,\gamma}$ from \eqref{eq:Cw} with $\gamma=2$ and various values of $a>0$, sample sizes $n$ and $m$, $\alpha=0.05$, 10\,000 bootstrap replications.}
\label{tab:crit:est}
\begin{tabular}{rr|rrrrrr} 
\toprule
  & \multicolumn{1}{c}{}& \multicolumn{3}{c}{R}&\multicolumn{3}{c}{G} \\ 
\cmidrule(r){3-5}\cmidrule(r){6-8}
  \multicolumn{1}{c}{$m$} & \multicolumn{1}{c}{$a$} & $n=20$&$n=50$&$n=80$&$n=20$&$n=50$&$n=80$ \\ 
 \cmidrule(r){1-2}\cmidrule(r){3-3}\cmidrule(r){4-4}\cmidrule(r){5-5}\cmidrule(r){6-6}\cmidrule(r){7-7}\cmidrule(r){8-8}
 &0.5&0.0352&0.0577&0.1218&0.0136&0.0128&0.0130 \\ 
 &1.0&0.1246&0.1939&0.3563&0.0981&0.0903&0.0835 \\ 
 &1.5&0.4687&0.5574&0.6280&0.2874&0.2638&0.2404 \\ 
 200&2.0&0.4433&0.5462&0.8550&0.5020&0.4516&0.4060 \\ 
 &2.5&0.9083&0.9740&1.0123&0.6722&0.5866&0.5379 \\ 
 &3.0&1.0193&1.0784&1.1115&0.7900&0.6895&0.6393 \\ 
 &3.5&1.1131&1.1462&1.1707&0.8792&0.7592&0.6877 \\ 
 \cmidrule(r){2-8}
 &0.5&0.0285&0.0329&0.0674&0.0117&0.0114&0.0108 \\ 
 &1.0&0.1493&0.1228&0.2346&0.1007&0.0936&0.0905 \\ 
 &1.5&0.3961&0.4584&0.4934&0.3137&0.2854&0.2737 \\ 
 500&2.0&0.6805&0.4325&0.7640&0.5509&0.4913&0.4661 \\ 
 &2.5&0.8784&0.9206&0.9315&0.7334&0.6523&0.6091 \\ 
 &3.0&0.9984&1.0393&1.0542&0.8637&0.7529&0.7195 \\ 
 &3.5&1.0814&1.1014&1.1246&0.9274&0.8337&0.7805 \\ 
 \cmidrule(r){2-8}
 &0.5&0.0193&0.0314&0.0419&0.0106&0.0111&0.0105 \\ 
 &1.0&0.1290&0.1586&0.1814&0.1014&0.1021&0.0949 \\ 
 &1.5&0.3827&0.4215&0.4442&0.3187&0.3122&0.2942 \\ 
 1000&2.0&0.6478&0.6940&0.7122&0.5664&0.5345&0.4985 \\ 
 &2.5&0.8568&0.8837&0.9070&0.7586&0.6984&0.6618 \\ 
 &3.0&0.9822&1.0046&1.0344&0.8837&0.8221&0.7792 \\ 
 &3.5&1.0755&1.0759&1.0882&0.9694&0.8957&0.8236 \\ 
\botrule
 \end{tabular}
\end{table}

Tables~\ref{tab:single:p=2} and \ref{tab:single:p=4} present results for the simple null hypothesis $\mathcal{H}_0: F_{\tX} = F_0$, where $F_0$ is DF of $\mathsf{N}_p(\bb 0, \bb I)$ for dimensions $p=2$ and $p=4$, respectively. They lead to analogous conclusions as Table~\ref{tab:single:p=3}, presented in the main text.

Table~\ref{tab:normality:p=34} complements results from Table~\ref{tab:warp} for testing the composite hypothesis of normality for dimensions $p\in\{3,4\}$. The test based on $\widetilde{D}_{n,m}^\star$ is compared to the same battery of tests from the R library \texttt{MVN} \citep{MVN}.

\begin{table}[htbp]
\scriptsize
\caption{Empirical level and power (in \%) for a goodness-of-fit test for bivariate normal $\mathsf{N}_2(\boldsymbol{0},\boldsymbol{I})$ distribution. 
$\mathsf{U}_{A}$ denotes the uniform distribution on the set $A$, $\mathsf{T}_2(\bm{0},\boldsymbol{I},3)$ is the bivariate $t$ distribution with 3 degrees of freedom, location $\bm{0}$ and identity scale matrix.
The values are computed from 1\,000 simulations for the nominal significance level $\alpha=0.05$.}
\label{tab:single:p=2}
\begin{tabular}{llr|rrr|rrr} 
\toprule
 & & & \multicolumn{3}{c}{$\mathcal{G}_N^R$}&\multicolumn{3}{c}{$\mathcal{G}_N^S$} \\ 
\cmidrule(r){4-6}\cmidrule(r){7-9}
 &\multicolumn{1}{c}{$m$} & \multicolumn{1}{c|}{$a$} & $n=20$&$n=50$&$n=80$&$n=20$&$n=50$&$n=80$ \\ 
\midrule
&&0.5&5.5&4.8&5.8&5.0&5.6&5.8 \\ 
 &&1.0&6.4&5.5&6.4&4.4&5.1&4.9 \\ 
 &200&2.0&4.9&3.7&3.7&5.0&5.6&5.1 \\ 
 &&3.0&4.9&4.7&3.4&5.1&4.5&5.2 \\ 
 &&4.0&5.6&5.8&6.3&5.9&4.4&3.6 \\ 
\cmidrule{3-3}\cmidrule{4-9} 
 \raisebox{1.5ex}[0pt]{$\mathsf{N}_2(\boldsymbol{0},\boldsymbol{I})$}&&0.5&5.3&4.4&5.4&3.9&4.6&5.1 \\ 
 &&1.0&5.0&4.4&4.6&3.2&5.9&4.3 \\ 
 &500&2.0&4.9&5.2&4.3&4.3&5.1&5.2 \\ 
 &&3.0&5.2&6.6&6.4&3.9&4.9&4.8 \\ 
 &&4.0&6.6&4.5&4.9&3.7&5.3&5.2 \\ 
\midrule
 &&0.5&0.6&2.4&4.4&0.5&13.8&64.3 \\ 
 &&1.0&3.1&11.7&27.9&40.0&99.5&100.0\\ 
 &200&2.0&22.4&74.1&91.2&86.1&100.0&100.0 \\ 
 &&3.0&38.1&82.8&94.8&79.1&99.9&100.0 \\ 
 &&4.0&41.1&85.8&96.3&72.4&99.5&100.0 \\ 
\cmidrule{3-3}\cmidrule{4-9}
 \raisebox{1.5ex}[0pt]{$\mathsf{U}_{(-1,1)^2}$}&&0.5&0.5&1.9&3.6&0.2&5.4&81.5 \\ 
 &&1.0&2.5&11.4&38.6&53.4&100.0&100.0 \\ 
 &500&2.0&24.7&85.0&98.9&92.5&100.0&100.0 \\ 
 &&3.0&41.2&93.5&99.8&90.5&100.0&100.0 \\ 
 &&4.0&49.0&95.8&100.0&79.4&100.0&100.0 \\ 
\midrule
 &&0.5&9.2&10.9&8.2&13.2&15.8&18.2 \\ 
 &&1.0&10.2&13.5&16.7&25.5&52.6&68.1 \\ 
 &200&2.0&21.0&42.5&60.0&46.0&80.1&92.7 \\ 
 &&3.0&24.4&55.5&76.1&48.7&83.0&94.2 \\ 
 &&4.0&29.2&58.8&77.1&39.9&81.0&92.7 \\ 
\cmidrule{3-3}\cmidrule{4-9}
 \raisebox{1.5ex}[0pt]{$\mathsf{U}_{(-2,2)^2}$}&&0.5&10.9&11.1&11.5&15.4&16.5&20.3 \\ 
 &&1.0&13.8&18.3&26.0&31.2&58.6&81.0 \\ 
 &500&2.0&22.7&49.7&75.9&50.6&88.2&97.6 \\ 
 &&3.0&29.4&64.3&85.8&51.4&85.5&98.3 \\ 
 &&4.0&28.0&66.7&87.1&46.8&86.4&96.3 \\ 
\midrule
 &&0.5&7.1&7.3&5.3&8.0&9.2&9.7 \\ 
 &&1.0&6.9&6.4&9.5&9.2&15.4&18.0 \\ 
 &200&2.0&7.4&6.3&9.5&8.9&15.6&22.1 \\ 
 &&3.0&8.1&7.8&10.9&8.1&12.2&16.3 \\ 
 &&4.0&6.8&8.1&10.7&8.3&11.0&13.5 \\ 
\cmidrule{3-3}\cmidrule{4-9}
 \raisebox{1.5ex}[0pt]{$\mathsf{T}_2(\bm{0},\boldsymbol{I},3)$}&&0.5&8.5&7.0&7.7&11.9&10.5&12.8 \\ 
 &&1.0&9.3&9.6&8.5&12.3&18.5&23.5 \\ 
 &500&2.0&6.3&9.2&12.9&10.5&17.1&26.3 \\ 
 &&3.0&7.1&10.7&15.0&9.0&13.0&16.9 \\ 
 &&4.0&6.5&9.8&12.8&6.4&11.4&15.8 \\ 
\botrule
 \end{tabular}
\end{table}

\begin{table}[htbp]
\scriptsize
\caption{Empirical level and power (in \%) for a goodness-of-fit test for  $\mathsf{N}_4(\boldsymbol{0},\boldsymbol{I})$ distribution. 
$\mathsf{U}_{A}$ denotes the uniform distribution on the set $A$, $\mathsf{T}_4(\bm{0},\boldsymbol{I},3)$ is $4$-variate $t$ distribution with 3 degrees of freedom, location $\bm{0}$ and identity scale matrix.
The values are computed from 1\,000 simulations for the nominal significance level $\alpha=0.05$.}
\label{tab:single:p=4}
\begin{tabular}{llr|rrr|rrr} 
\toprule
 & & & \multicolumn{3}{c}{$\mathcal{G}_N^R$}&\multicolumn{3}{c}{$\mathcal{G}_N^S$} \\ 
\cmidrule(r){4-6}\cmidrule(r){7-9}
 &\multicolumn{1}{c}{$m$} & \multicolumn{1}{c|}{$a$} & $n=20$&$n=50$&$n=80$&$n=20$&$n=50$&$n=80$ \\ 
\midrule
 &&1.0&5.5&5.2&5.9&5.3&4.5&4.4 \\
 &&2.0&4.4&5.3&5.1&5.3&5.2&6.3 \\
 &200&3.0&5.6&5.7&4.9&4.7&4.7&5.1 \\
 &&4.0&4.9&5.3&4.6&5.2&4.4&6.6 \\
  \cmidrule{3-3}\cmidrule{4-9} 
\raisebox{1.5ex}[0pt]{$\mathsf{N}_4(\boldsymbol{0},\boldsymbol{I})$}&&1.0&5.7&4.3&4.2&5.9&4.4&4.7 \\
 &&2.0&5.3&5.2&5.0&4.4&6.4&4.5 \\
 &500&3.0&4.2&5.6&4.6&5.1&5.6&6.1 \\
 &&4.0&3.7&3.9&4.4&4.3&5.8&4.8 \\
  \cmidrule{1-3}\cmidrule{4-9} 
 &&1.0&11.5&15.8&22.0&43.1&69.4&83.6 \\
 &&2.0&17.8&41.1&68.0&67.9&94.7&99.1 \\
 &200&3.0&14.5&45.0&68.2&70.0&95.5&99.5 \\
 &&4.0&11.4&33.6&64.4&66.6&94.9&99.3 \\
  \cmidrule{3-3}\cmidrule{4-9} 
 \raisebox{1.5ex}[0pt]{$\mathsf{U}_{(-2,2)^4}$}&&1.0&15.0&26.9&42.2&48.9&83.1&96.8 \\
 &&2.0&22.0&60.0&87.2&74.0&98.0&99.8 \\
 &500&3.0&20.4&62.3&87.1&75.3&97.5&99.8 \\
 &&4.0&15.1&48.1&78.4&70.1&98.2&99.9 \\
  \cmidrule{1-3}\cmidrule{4-9} 
 &&1.0&6.2&5.5&6.8&14.5&22.8&28.3 \\
 &&2.0&4.3&8.5&7.5&16.1&22.2&28.7 \\
 &200&3.0&5.3&5.5&6.8&9.0&10.9&13.3 \\
 &&4.0&5.2&5.4&6.3&7.7&6.0&7.2 \\
  \cmidrule{3-3}\cmidrule{4-9} 
\raisebox{1.5ex}[0pt]{$\mathsf{T}_4(\bm{0},\boldsymbol{I},3)$}&&1.0&6.7&7.0&9.1&17.3&30.8&37.4 \\
 &&2.0&4.8&7.1&7.7&17.7&22.4&32.7 \\
 &500&3.0&6.2&7.0&7.2&11.5&11.8&19.6 \\
 &&4.0&4.5&6.9&8.3&6.9&7.1&7.9 \\
\botrule
\end{tabular}
\end{table}

\begin{table}[htbp]
\scriptsize
\caption{Empirical level and power (in \%) for goodness-of-fit tests for  $\mathsf{N}_p(\boldsymbol{0},\boldsymbol{I})$ distribution, dimension $p\in\{3,4\}$. 
$\mathsf{U}_{A}$ denotes the uniform distribution on the set $A$, $\mathsf{T}_p(\bm{0},\boldsymbol{I},3)$ is $p$-variate $t$ distribution with 3 degrees of freedom, location $\bm{0}$ and identity scale matrix.
D-S denotes a test that uses the two-sample energy test statistic from \citet{deb2023}, while
the other reference tests are  computed using R library \texttt{MVN}.  
The values are computed from 1\,000 simulations for the nominal significance level $\alpha=0.05$.}
\label{tab:normality:p=34}
\begin{tabular}{rc|rrr|rrr|rrr} 
\toprule 
  & &\multicolumn{3}{c}{$\mathsf{N}_p (\boldsymbol{0},\boldsymbol I)$}&\multicolumn{3}{c}{$\mathsf{U}_{(-1,1)^p}$}&\multicolumn{3}{c}{$\mathsf{T}_p(\bm{0},\boldsymbol{I},3)$} \\[1ex] 
  && \multicolumn{9}{c}{Sample size $n$}\\
   \cmidrule(r){1-11} 
  & $a$ & 20&50&80&20&50&80&20&50&80 \\ 
  \cmidrule(r){1-11} 
$p=3$ &0.5&4.1&3.4&4.0&2.9&0.7&0.5&37.0&89.0&97.8 \\ 
 &1.0&3.2&3.7&6.6&0.3&0.1&0.7&60.2&95.0&99.5 \\ 
 &2.0&6.8&4.4&5.0&11.0&49.9&85.5&63.9&97.0&99.8 \\ 
 &2.5&4.7&3.2&6.5&20.7&68.4&93.3&56.6&94.5&99.5 \\ 
 &3.0&4.0&4.3&5.5&23.1&70.3&94.0&55.3&95.8&99.3 \\ 
 &4.0&4.2&3.9&3.2&25.5&68.8&90.7&47.3&94.9&99.3 \\ 
 &5.0&5.3&5.2&5.9&28.2&62.8&87.7&48.6&91.1&98.9 \\ 
 \cmidrule(r){1-2}\cmidrule(r){3-11} 
 & \multicolumn{1}{l|}{D-S} &4.6&4.2&5.0&8.1&41.2&69.6&60.7&95.7&99.8 \\ 
 \cmidrule(r){1-2}\cmidrule(r){3-11} 
 & \multicolumn{1}{l|}{H-Z}   & 4.6 & 5.1 & 4.6 & 13.8 & 61.3 & 92.4 & 50.7 & 90.3 & 98.5  \\
&\multicolumn{1}{l|}{M3}     & 1.9 & 3.9  &  4.7   &  0.0 &  0.0 &  0.0 &  57.4    & 90.4     & 96.0   \\
&\multicolumn{1}{l|}{M4}     & 0.1 & 1.7 &  3.3   &  0.7 & 75.4 &  98.8 &  39.7    & 95.2     & 99.7   \\
 &\multicolumn{1}{l|}{Royston}& 7.3 & 7.9 & 6.8 & 37.0 & 99.2 & 100.0& 63.2 & 93.1 & 98.8  \\
 &\multicolumn{1}{l|}{D-H}    & 5.2 & 5.1 & 5.6 &  6.7 & 74.7 & 99.2 & 42.2 & 89.7 & 98.1  \\  
 &\multicolumn{1}{l|}{energy} & 6.3 & 6.0 & 5.2 &  7.1 & 42.6 & 82.7 & 63.0 & 95.4 & 99.4  \\
 \midrule
$p=4$ &0.5&5.3&4.7&4.0&2.8&1.8&3.4&21.1&75.7&93.6 \\ 
 &1.0&4.6&4.7&4.5&1.3&0.1&0.0&45.1&91.8&99.7 \\ 
 &2.0&4.3&5.7&5.0&0.7&1.7&9.2&62.3&97.3&99.9 \\ 
 &2.5&4.4&5.6&6.0&3.2&16.2&48.3&66.8&98.6&99.5 \\ 
 &3.0&4.1&3.6&5.4&8.1&27.6&63.6&67.2&97.8&100.0 \\ 
 &4.0&4.6&3.4&4.1&15.3&44.8&71.2&63.6&96.3&99.8 \\ 
 &5.0&3.5&4.2&6.2&26.3&45.6&70.5&63.9&98.5&99.9 \\ 
 \cmidrule(r){1-2}\cmidrule(r){3-11} 
& \multicolumn{1}{l|}{D-S}&2.9&3.8&6.4&1.6&0.9&12.5&58.0&96.7&99.7 \\
\cmidrule(r){1-2}\cmidrule(r){3-11} 
& \multicolumn{1}{l|}{H-Z}   & 5.7 & 4.3 & 5.5 & 11.0 & 55.9 & 90.2 &56.6 & 95.4&  99.6\\
&\multicolumn{1}{l|}{M3}     & 1.5 & 4.2 & 4.8 &  0.0 &  0.0 &  0.0 & 62.5 & 96.3 & 99.1 \\
&\multicolumn{1}{l|}{M4}     & 0.2 & 1.6 & 2.8 &  4.8 & 83.2 &  99.6 & 37.5 & 97.6 & 100.0 \\
 &\multicolumn{1}{l|}{Royston}& 7.5 & 7.6 & 4.9 & 46.5 & 99.8 & 100.0& 70.3& 96.5& 99.7 \\
 &\multicolumn{1}{l|}{D-H}    & 4.8 & 5.8 & 4.4 &  5.0 & 83.1 & 100.0& 37.6& 94.4& 99.3 \\  
 &\multicolumn{1}{l|}{energy} & 5.8 & 7.3 & 4.8 &  4.0 & 29.4 & 64.7 & 70.5& 98.3& 100.0 \\
  \botrule
 \end{tabular}
\end{table}


Figure~\ref{fig:CAS-hist} shows histograms of the income and expenditure data. The red dashed line corresponds to the fitted density of a normal distribution, while the blue solid line is the density of the fitted lognormal distribution.  The plots suggest that the marginal distribution of income exhibits positive skewness, 
while the skewness of expenditures is comparatively modest.


\begin{figure}
    \centering
    \includegraphics[width=0.4\textwidth]{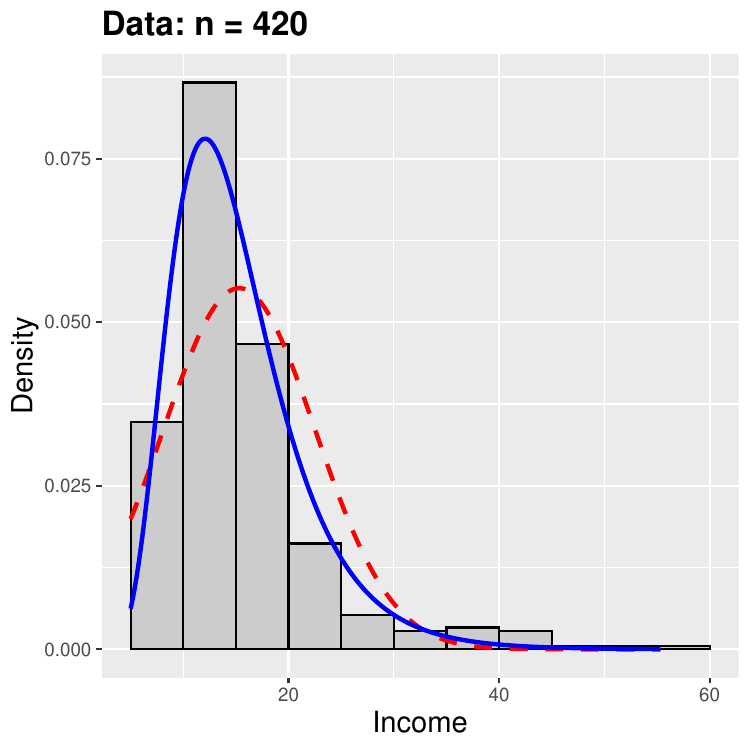}
     \includegraphics[width=0.4\textwidth]{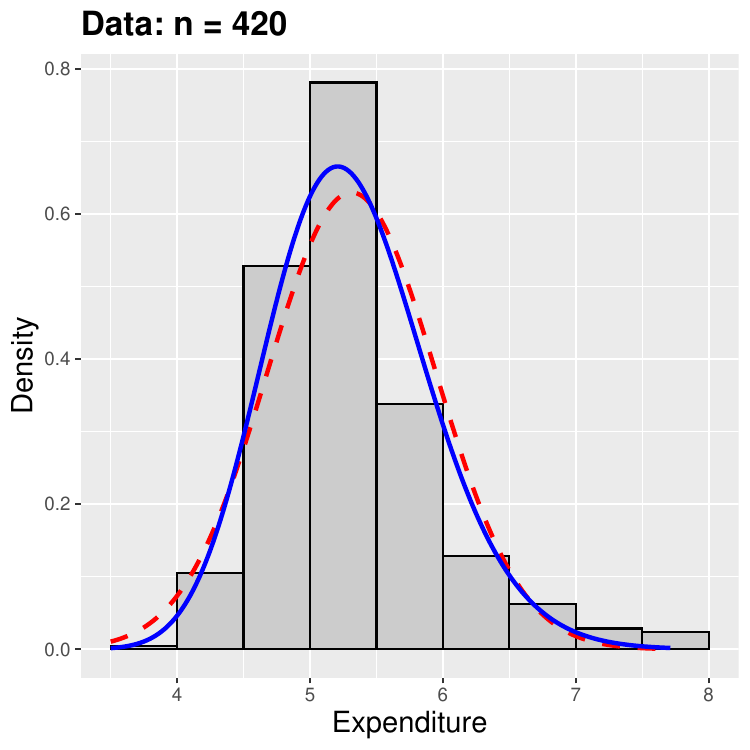}
        \includegraphics[width=0.4\textwidth]{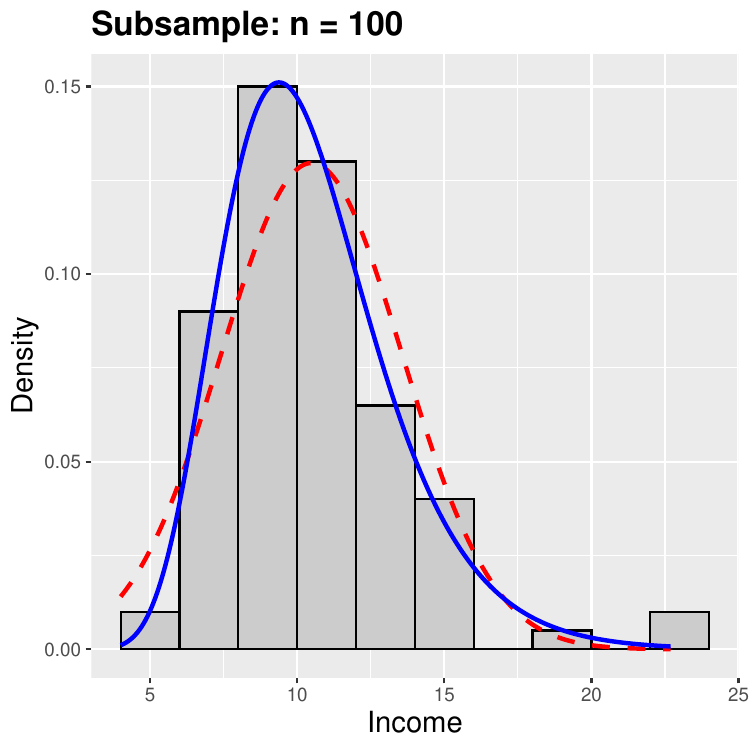}
     \includegraphics[width=0.4\textwidth]{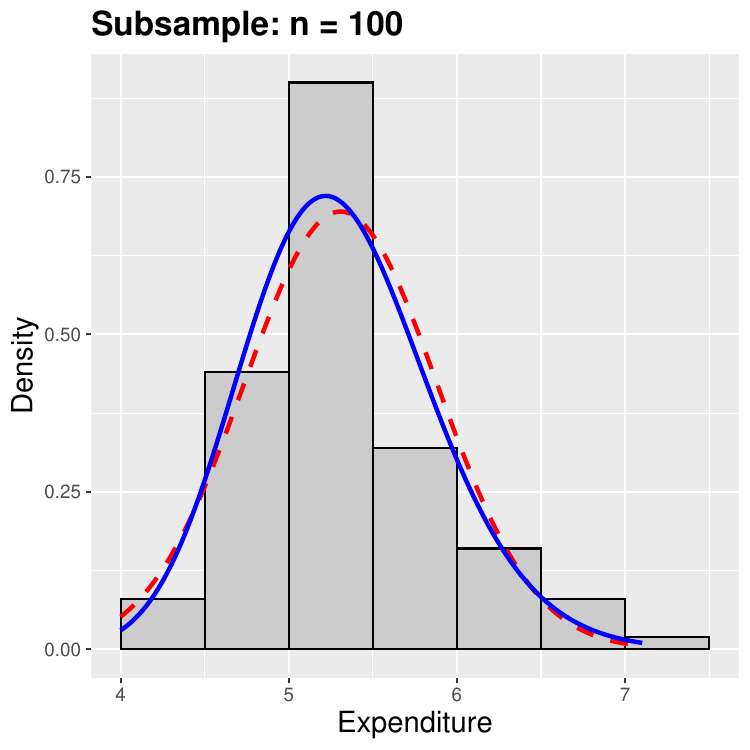}
    \caption{Histograms of income and expenditure data, together with the density of the fitted normal distribution  (red dashed line) and the density of the fitted lognormal distribution (blue solid line).  The entire data set (top panels) and the subsample of the first 100 observations (bottom panels).  
    }
    \label{fig:CAS-hist}
\end{figure}

%
%
%
%

%

\end{document}